\theoremstyle{definition}
\newtheorem{definition}{Definition}
\theoremstyle{plain}
\newtheorem{lemma}[definition]{Lemma}
\newtheorem{proposition}[definition]{Proposition}
\newtheorem{theorem}[definition]{Theorem}
\begin{document}

\title[The hyperdeterminant of $3 \times 3 \times 2$ arrays]
{The hyperdeterminant of $3 \times 3 \times 2$ arrays, and\\
the simplest invariant of $4 \times 4 \times 2$ arrays}

\author{Murray R. Bremner}

\address{Department of Mathematics and Statistics,
University of Saskatchewan, Canada}

\email{bremner@math.usask.ca}

\begin{abstract}
We use the representation theory of Lie algebras and computational linear algebra
to obtain an explicit formula for the hyperdeterminant of a $3 \times 3 \times 2$ array:
a homogeneous polynomial of degree 12 in 18 variables with 16749 monomials and 41 distinct
integer coefficients; the monomials belong to 178 orbits under the action of
$(S_3 \times S_3 \times S_2) \rtimes S_2$.  We also obtain the simplest invariant
for a $4 \times 4 \times 2$ array:
a homogeneous polynomial of degree 8 in 32 variables with 14148 monomials and 13 distinct
integer coefficients; the monomials belong to 28 orbits under
$(S_4 \times S_4 \times S_2) \rtimes S_2$.
\end{abstract}

\maketitle


\section{Preliminaries}

The simple Lie algebra $\mathfrak{sl}_n(\mathbb{C})$ is the vector space of $n \times n$
complex matrices of trace 0
with the commutator product $[A,B] = AB - BA$.
The Cartan subalgebra is spanned by $H_i = E_{i,i} - E_{i+1,i+1}$
and the simple root vectors are $U_i = E_{i,i+1}$ where $i = 1, \dots, n{-}1$;
here $E_{ij}$ in the matrix unit with 1 in position $(i,j)$.
The Cartan matrix $\Gamma = ( \gamma_{ij} )$ determines the brackets $[H_i, U_j]$:
  \[
  [ H_i, U_j ] = \gamma_{ij} U_j,
  \qquad
  \gamma_{ij} =
  \left\{ \begin{tabular}{rl}
  $2$ & if $i=j$ \\ $-1$ & if $|i-j|=1$ \\ $0$ & if $|i-j| \ge 2$
  \end{tabular} \right.
  \]
We write $\Gamma_i$ for the $i$-th row of $\Gamma$.
The irreducible representation of $\mathfrak{sl}_n(\mathbb{C})$ on the vector space $\mathbb{C}^n$
is given by matrix-vector multiplication $A \cdot u = Au$.
We define
  \[
  \mathfrak{sl}_{pqr}(\mathbb{C}) =
  \mathfrak{sl}_p(\mathbb{C}) \oplus
  \mathfrak{sl}_q(\mathbb{C}) \oplus
  \mathfrak{sl}_r(\mathbb{C}),
  \qquad
  \mathbb{C}^{pqr} =
  \mathbb{C}^p \otimes \mathbb{C}^q \otimes \mathbb{C}^r.
  \]
The semisimple Lie algebra $\mathfrak{sl}_{pqr}(\mathbb{C})$ acts irreducibly
on the tensor product $\mathbb{C}^{pqr}$ by
  \[
  (A,B,C) \cdot (u \otimes v \otimes w) =
  (A \cdot u) \otimes v \otimes w + u \otimes (B \cdot v) \otimes w + u \otimes v \otimes (C \cdot w).
  \]
The standard basis of simple tensors $x_{ijk} = x_i \otimes x_j \otimes x_k$
allows us to identify the elements of $\mathbb{C}^{pqr}$ with $p \times q \times r$ arrays
(also called 3-dimensional matrices).
The algebra $\mathbb{C}[ x_{ijk} ]$ of polynomial functions on $\mathbb{C}^{pqr}$ is graded by
degree and generated by the subspace $\mathbb{C}^{pqr}$ of homogeneous elements of degree 1;
here we identify $\mathbb{C}^{pqr}$ with its dual $(\mathbb{C}^{pqr})^\ast$.
A basis for the homogeneous subspace of degree $d$ consists of the monomials
with exponent arrays $E$ of nonnegative integers summing to $d$:
  \begin{equation}
  \label{X}
  X = X(E) = \prod_{i,j,k} x_{ijk}^{e_{ijk}},
  \qquad
  E = ( e_{ijk} ),
  \end{equation}
The action of $\mathfrak{sl}_{pqr}(\mathbb{C})$ extends from $\mathbb{C}^{pqr}$ to $\mathbb{C}[ x_{ijk} ]$
by the derivation property,
  \[
  D \cdot ( fg ) = ( D \cdot f ) \, g + f \, ( D \cdot g ),
  \qquad
  D = (A,B,C).
  \]
Each homogeneous subspace is finite dimensional and hence completely reducible as a representation of
$\mathfrak{sl}_{pqr}(\mathbb{C})$.
The subalgebra of invariants is the direct sum of the trivial 1-dimensional representations in each degree:
the polynomials $f$ such that $D \cdot f = 0$ for all
$D \in \mathfrak{sl}_{pqr}(\mathbb{C})$.
By elementary representation theory, it suffices to assume that $H_\ell \cdot f = 0$ and $E_\ell \cdot f = 0$
for all three summands of $\mathfrak{sl}_{pqr}(\mathbb{C})$.
The results in the rest of this section are either standard or can be proved by straightforward calculation;
a simpler example of this approach appears in Bremner \cite{B}.
We mention Humphreys \cite{Humphreys} as a reference for Lie algebras and representation theory.

\begin{lemma}
The elements $H_i^{(1)}$, $H_j^{(2)}$, $H_k^{(3)}$ in the summands
$\mathfrak{sl}_p(\mathbb{C})$, $\mathfrak{sl}_q(\mathbb{C})$, $\mathfrak{sl}_r(\mathbb{C})$
act diagonally on monomials as the following linear differential operators:
  \begin{align*}
  H_i^{(1)} \cdot X
  &=
  \bigg( \frac{\partial}{\partial x_{i,j,k}} - \frac{\partial}{\partial x_{i+1,j,k}} \bigg) X
  =
  \Big( \sum_{j,k} e_{i,j,k} - e_{i+1,j,k} \Big) X
  \quad
  (1 \le i \le p{-}1),
  \\
  H_j^{(2)} \cdot X
  &=
  \bigg( \frac{\partial}{\partial x_{i,j,k}} - \frac{\partial}{\partial x_{i,j+1,k}} \bigg) X
  =
  \Big( \sum_{i,k} e_{i,j,k} - e_{i,j+1,k} \Big) X
  \quad
  (1 \le j \le q{-}1),
  \\
  H_k^{(3)} \cdot X
  &=
  \bigg( \frac{\partial}{\partial x_{i,j,k}} - \frac{\partial}{\partial x_{i,j,k+1}} \bigg) X
  =
  \Big( \sum_{i,j} e_{i,j,k} - e_{i,j,k+1} \Big) X
  \quad
  (1 \le k \le r{-}1).
  \end{align*}
\end{lemma}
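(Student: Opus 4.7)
The plan is to reduce everything to the action of the Cartan generators on the standard basis and then invoke the derivation rule. First I would compute $H_i \cdot x_a$ in the defining representation of $\mathfrak{sl}_n(\mathbb{C})$: since $H_i = E_{ii} - E_{i+1,i+1}$ acts by matrix multiplication on $\mathbb{C}^n$, one immediately gets $H_i \cdot x_a = (\delta_{ia} - \delta_{i+1,a})\, x_a$. Substituting this into the tensor action formula $(A,B,C)\cdot (u\otimes v\otimes w) = (Au)\otimes v\otimes w + \cdots$ with only the first slot active yields $H_i^{(1)} \cdot x_{abc} = (\delta_{ia} - \delta_{i+1,a})\, x_{abc}$, and the analogous identity for $H_j^{(2)}$ and $H_k^{(3)}$ on the second and third tensor factors respectively.

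Next I would extend to a monomial $X = \prod_{a,b,c} x_{abc}^{e_{abc}}$ by the derivation rule $D\cdot(fg) = (D\cdot f)\,g + f\,(D\cdot g)$. Since each factor $x_{abc}$ is already an eigenvector of $H_i^{(1)}$ with eigenvalue $\delta_{ia}-\delta_{i+1,a}$, the product rule (applied inductively to the $|E|$-fold product) gives $H_i^{(1)} \cdot X = \bigl(\sum_{a,b,c} e_{abc}(\delta_{ia}-\delta_{i+1,a})\bigr)\, X$, which collapses to $\bigl(\sum_{b,c} (e_{ibc} - e_{i+1,b,c})\bigr)\, X$ — the rightmost expression in the lemma. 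The middle equality is then just the Euler operator identity: the differential operator $\sum_{b,c}\bigl(x_{ibc}\,\partial/\partial x_{ibc} - x_{i+1,b,c}\,\partial/\partial x_{i+1,b,c}\bigr)$ has the same eigenvalue on $X$ and is the natural realization of $H_i^{(1)}$ as a derivation on the polynomial algebra $\mathbb{C}[x_{ijk}]$. The formulas for $H_j^{(2)}$ and $H_k^{(3)}$ are identical \emph{mutatis mutandis}.

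I do not foresee any real obstacle: the only genuine inputs are the definition $H_i = E_{ii} - E_{i+1,i+1}$, the tensor action formula from the preliminaries, and the product rule. The one point that requires a little care is the identification $\mathbb{C}^{pqr} \cong (\mathbb{C}^{pqr})^\ast$ used to view $x_{ijk}$ simultaneously as a basis vector and as a linear coordinate function; fixing this identification so that $H_i$ acts by the same diagonal matrix in both interpretations produces the signs stated in the lemma, whereas the coadjoint convention would flip them. So I would pin down this convention at the outset, after which the calculation becomes a routine bookkeeping exercise in indices.
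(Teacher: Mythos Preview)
Your proposal is correct and is exactly the ``straightforward calculation'' the paper alludes to; the paper states this lemma without proof, merely remarking that the results in that section are standard. Your observation that the middle expression should really be read as the Euler-type operator $\sum_{b,c}\bigl(x_{ibc}\,\partial/\partial x_{ibc} - x_{i+1,b,c}\,\partial/\partial x_{i+1,b,c}\bigr)$, and your care about the $\mathbb{C}^{pqr}\cong(\mathbb{C}^{pqr})^\ast$ identification, are both apt and slightly more precise than the paper's own presentation.
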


\begin{definition}
The \textbf{weight} $\Omega(X)$ of a monomial is the ordered list of $p{+}q{+}r{-}3$ integers
consisting of its eigenvalues for $H_i^{(1)}$, $H_j^{(2)}$, $H_k^{(3)}$:
  \[
  \Omega(X)
  =
  ( \Omega_1, \Omega_2, \Omega_3 )
  =
  ( \omega_{11}, \dots, \omega_{1p}, \omega_{21}, \dots, \omega_{2q}, \omega_{31}, \dots, \omega_{3r} ).
  \]
The \textbf{weight space} $W(d;\Omega)$ has a basis consisting of the monomials of degree $d$ and weight $\Omega$.
The \textbf{zero weight space} in degree $d$ is the space $W(d;0)$.
\end{definition}

\begin{lemma}
A basis of $W(d;0)$ consists of the monomials $X(E)$ whose exponent arrays $E$ have entry sum $d$ and satisfy
the equal parallel slice property. That is, the following entry sums over 2-dimensional slices do not depend on
$i$, $j$, $k$
respectively:
  \[
  T_1(i) = \sum_{j,k} e_{ijk},
  \qquad
  T_2(j) =  \sum_{i,k} e_{ijk},
  \qquad
  T_3(k) =  \sum_{i,j} e_{ijk}.
  \]
Hence the degree of an invariant polynomial is a multiple of $\mathrm{LCM}(p,q,r)$.
\end{lemma}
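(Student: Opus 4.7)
The plan is to read off the condition for $X(E)$ to lie in the zero weight space directly from the preceding lemma, and then extract the divisibility statement from a simple counting of slice sums.

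First I would invoke the previous lemma, which shows that the monomial basis of the degree-$d$ homogeneous subspace consists of simultaneous eigenvectors for the Cartan subalgebra of $\mathfrak{sl}_{pqr}(\mathbb{C})$, with $H_i^{(1)}$ acting on $X(E)$ by the scalar $T_1(i)-T_1(i{+}1)$ and analogously for $H_j^{(2)}$, $H_k^{(3)}$. Because distinct weights give linearly independent eigenspaces, the weight space $W(d;\Omega)$ is always spanned by the monomials of degree $d$ having that exact weight; in particular $W(d;0)$ has as a basis precisely those $X(E)$ for which all of these eigenvalues vanish. The conditions $T_1(i)=T_1(i{+}1)$ for $i=1,\dots,p{-}1$ are equivalent to $T_1$ being independent of $i$, and similarly in the other two directions, giving the equal parallel slice property.

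Next I would deduce the divisibility claim. An invariant polynomial is annihilated by the whole Cartan subalgebra, hence lies in $W(d;0)$ and decomposes as a linear combination of monomials satisfying the equal parallel slice property. For any such monomial, summing $T_1(i)$ over $i$ gives the total entry sum $d$, so the common value of $T_1(i)$ is $d/p$; since $T_1(i)$ is a nonnegative integer, $p\mid d$. The same argument for the $j$- and $k$-directions gives $q\mid d$ and $r\mid d$, hence $\mathrm{LCM}(p,q,r)\mid d$.

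There is really no obstacle here: the whole statement is a bookkeeping consequence of the explicit diagonal action established in the previous lemma. The only conceptual point worth flagging is the passage from ``a monomial is a weight vector with zero weight'' to ``the weight-zero monomials form a basis of $W(d;0)$,'' which relies on the standard fact that the weight decomposition of a finite-dimensional Cartan module is a direct sum.
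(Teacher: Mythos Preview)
Your proposal is correct and is exactly the straightforward verification the paper has in mind; the paper does not spell out a proof of this lemma, explicitly noting that the results in this section ``are either standard or can be proved by straightforward calculation.'' Your argument---reading off the eigenvalues $T_1(i)-T_1(i{+}1)$ from the preceding lemma, using the weight-space decomposition to identify a monomial basis of $W(d;0)$, and then summing the common slice values to get the divisibility---is precisely that calculation.
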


\begin{lemma}
The elements $U_i^{(1)}$, $U_j^{(2)}$, $U_k^{(3)}$ in the summands
$\mathfrak{sl}_p(\mathbb{C})$, $\mathfrak{sl}_q(\mathbb{C})$, $\mathfrak{sl}_r(\mathbb{C})$
act on monomials as the following linear differential operators:
  \[
  U_i^{(1)}
  =
  x_{i,j,k} \frac{\partial}{\partial x_{i+1,j,k}},
  \qquad
  U_j^{(2)}
  =
  x_{i,j,k} \frac{\partial}{\partial x_{i,j+1,k}},
  \qquad
  U_k^{(3)}
  =
  x_{i,j,k} \frac{\partial}{\partial x_{i,j,k+1}}.
  \]
Hence we have the following formulas, where $E'$, $E''$, $E'''$ depend on $i, j, k$
and are the same as $E$ with the indicated exceptions:
  \begin{alignat*}{2}
  U_i^{(1)} \cdot X &= \sum_{j,k} e_{i+1,j,k} \, X(E'),
  &\qquad
  &\left\{
  \begin{array}{l}
  e'_{i+1,j,k} = e_{i+1,j,k}-1
  \\
  e'_{i,j,k} = e_{i,j,k} + 1
  \end{array}
  \right.
  \\
  U_j^{(2)} \cdot X &= \sum_{i,k} e_{i,j+1,k} \, X(E''),
  &\qquad
  &\left\{
  \begin{array}{l}
  e''_{i,j+1,k} = e_{i,j+1,k}-1
  \\
  e''_{i,j,k} = e_{i,j,k} + 1
  \end{array}
  \right.
  \\
  U_k^{(3)} \cdot X &= \sum_{i,j} e_{i,j,k+1} \, X(E'''),
  &\qquad
  &\left\{
  \begin{array}{l}
  e'''_{i,j,k+1} = e_{i,j,k+1}-1
  \\
  e'''_{i,j,k} = e_{i,j,k} + 1
  \end{array}
  \right.
  \end{alignat*}
\end{lemma}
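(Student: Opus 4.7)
My plan is to derive the differential-operator description of $U_i^{(1)}$, $U_j^{(2)}$, $U_k^{(3)}$ directly from their definitions as matrix units $E_{i,i+1}$ in the three simple summands, and then apply the result to a monomial using the derivation property already stated in the text. By the symmetry of the three factors it suffices to treat $U_i^{(1)}$; the arguments for $U_j^{(2)}$ and $U_k^{(3)}$ are word-for-word identical with the roles of the indices permuted.

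First I would unpack the action on the defining representation. The simple root vector $U_i = E_{i,i+1}$ acts on the standard basis of $\mathbb{C}^p$ by $U_i \cdot x_a = \delta_{a,i+1}\, x_i$. Embedding $U_i$ into $\mathfrak{sl}_{pqr}(\mathbb{C})$ as $U_i^{(1)} = (U_i, 0, 0)$, the prescription
\[
(A,B,C) \cdot (u \otimes v \otimes w) = (A \cdot u) \otimes v \otimes w + u \otimes (B \cdot v) \otimes w + u \otimes v \otimes (C \cdot w)
\]
collapses to $U_i^{(1)} \cdot x_{a,b,c} = \delta_{a,i+1}\, x_{i,b,c}$ on simple tensors. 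Reading this off the canonical basis $\{x_{a,b,c}\}$ of $\mathbb{C}^{pqr}$, the action of $U_i^{(1)}$ coincides (on the degree-one subspace) with the first-order differential operator
\[
\sum_{j,k} x_{i,j,k}\, \frac{\partial}{\partial x_{i+1,j,k}},
\]
since this operator also sends $x_{a,b,c}$ to $\delta_{a,i+1}\, x_{i,b,c}$ and annihilates $x_{a,b,c}$ whenever $a \neq i+1$. This is the content of the displayed formula for $U_i^{(1)}$ in the lemma (with the summation over $j,k$ understood).

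Next I would promote this identity from $\mathbb{C}^{pqr}$ to the whole polynomial algebra $\mathbb{C}[x_{ijk}]$. Both $U_i^{(1)}$ (by the derivation-property hypothesis in the excerpt) and the differential operator $\sum_{j,k} x_{i,j,k}\,\partial/\partial x_{i+1,j,k}$ act as derivations on $\mathbb{C}[x_{ijk}]$, and two derivations that agree on a generating set agree everywhere. Since the $x_{abc}$ generate the polynomial algebra in degree one, the equality extends to all of $\mathbb{C}[x_{ijk}]$.

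Finally, applying this operator to the monomial $X = X(E) = \prod_{a,b,c} x_{abc}^{e_{abc}}$ is a routine calculation: for each pair $(j,k)$, the partial derivative $\partial/\partial x_{i+1,j,k}$ brings down the factor $e_{i+1,j,k}$ and decrements the exponent $e_{i+1,j,k}$ by one, while the multiplication by $x_{i,j,k}$ increments the exponent $e_{i,j,k}$ by one; summing over $(j,k)$ yields $U_i^{(1)} \cdot X = \sum_{j,k} e_{i+1,j,k}\, X(E')$ with $E'$ as described. Since every step is a direct identification, I do not anticipate a substantive obstacle; the only point that requires some care is book-keeping the index conventions so that the exponent modifications match across the three parallel statements.
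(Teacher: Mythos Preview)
Your argument is correct: you identify the action of $E_{i,i+1}$ on the standard basis, push it through the tensor-product action to the degree-one generators, match it with the first-order operator $\sum_{j,k} x_{i,j,k}\,\partial/\partial x_{i+1,j,k}$, and then extend by the derivation property before computing on a general monomial. The paper does not actually supply a proof of this lemma---it is one of the results covered by the blanket remark that everything in that section ``can be proved by straightforward calculation''---so your write-up is exactly the kind of verification the paper omits, and there is nothing to compare.
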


\begin{lemma} \label{U-maps}
In each degree $d$, the actions of $U_i^{(1)}$, $U_j^{(2)}$, $U_k^{(3)}$ induce linear maps from
the zero weight space to higher weight spaces as follows:
  \begin{align*}
  &
  U_i^{(1)}\colon W(d;0) \longrightarrow W(d;\Gamma_i,0,0),
  \\
  &
  U_j^{(2)}\colon W(d;0) \longrightarrow W(d;0,\Gamma_j,0),
  \\
  &
  U_k^{(3)}\colon W(d;0) \longrightarrow W(d;0,0,\Gamma_k).
  \end{align*}
\end{lemma}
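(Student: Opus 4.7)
The plan is to verify directly that each positive root operator shifts weights in the expected way, using the explicit formulas from the previous lemma; by the symmetry of the three factors it suffices to treat $U_i^{(1)}$, and the statements for $U_j^{(2)}$ and $U_k^{(3)}$ will follow verbatim.

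First I would fix a monomial $X=X(E)$ of weight $\Omega(X) = (\Omega_1,\Omega_2,\Omega_3)$ and compute the weight of each monomial $X(E')$ appearing in the expansion $U_i^{(1)}\cdot X = \sum_{j,k} e_{i+1,j,k}\, X(E')$, where $E'$ differs from $E$ only by $e'_{i,j,k} = e_{i,j,k}+1$ and $e'_{i+1,j,k} = e_{i+1,j,k}-1$ for the particular summation indices. The key observation is that this modification occurs within a single $(j,k)$-column, so the 2-dimensional slice sums $T_2(\ell) = \sum_{i,k} e_{i,j,k}$ and $T_3(\ell) = \sum_{i,j} e_{i,j,k}$ change by $+1-1=0$ for every $\ell$; hence the second and third weight components of $X(E')$ equal those of $X$.

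Next I would compute the change in the first weight component. Only $T_1(i)$ and $T_1(i+1)$ are affected, with $T_1(i)$ increasing by $1$ and $T_1(i+1)$ decreasing by $1$. Writing $\omega_{1\ell} = T_1(\ell) - T_1(\ell{+}1)$, this means $\omega_{1,i-1}$ decreases by $1$, $\omega_{1,i}$ increases by $2$, $\omega_{1,i+1}$ decreases by $1$, and all other $\omega_{1\ell}$ are unchanged, which is exactly the entrywise effect of adding $\Gamma_i$. Therefore $\Omega(X(E')) = (\Omega_1 + \Gamma_i, \Omega_2, \Omega_3)$ for every term in the sum, so $U_i^{(1)}$ sends $W(d;\Omega)$ into $W(d;\Omega_1+\Gamma_i,\Omega_2,\Omega_3)$ in general, and specializing to $\Omega = 0$ gives the claim.

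There is no real obstacle here: the entire argument is bookkeeping on exponent arrays. The only point requiring any care is checking that the shift pattern $(-1,+2,-1)$ at positions $i-1,i,i+1$ exactly reproduces the $i$-th row of the Cartan matrix $\Gamma$ (with the obvious boundary conventions when $i=1$ or $i=p{-}1$), which is immediate from the definition of $\Gamma$ recalled in the preliminaries. Alternatively, one may bypass the monomial computation and deduce the weight shift from the commutation relation $[H_\ell^{(1)}, U_i^{(1)}] = \gamma_{\ell i}\, U_i^{(1)}$ together with the fact that $U_i^{(1)}$ commutes with $H_\ell^{(2)}$ and $H_\ell^{(3)}$ because they live in distinct summands of $\mathfrak{sl}_{pqr}(\mathbb{C})$; this is the conceptually cleaner route and I would include it as a remark.
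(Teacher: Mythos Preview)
Your argument is correct and is precisely the ``straightforward calculation'' the paper alludes to; the paper does not give an explicit proof of this lemma, instead stating that the results in that section ``are either standard or can be proved by straightforward calculation.'' Your direct bookkeeping on exponent arrays, together with the alternative via the commutation relations $[H_\ell, U_i] = \gamma_{\ell i} U_i$, covers exactly what is needed.
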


\begin{definition}
In degree $d$, the map $U(d)$ is the direct sum of all $U_i^{(1)}$, $U_j^{(2)}$, $U_k^{(3)}$:
  \[
  U(d) \colon W(d;0)
  \longrightarrow
  \bigoplus_i W(d;\Gamma_i,0,0)
  \oplus
  \bigoplus_j W(d;0,\Gamma_j,0)
  \oplus
  \bigoplus_k W(d;0,0,\Gamma_k).
  \]
\end{definition}

\begin{theorem}
In degree $d$, the invariant polynomials are the nullspace of $U(d)$.
\end{theorem}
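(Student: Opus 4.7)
The plan is to show both inclusions between the space of degree-$d$ invariants and $\ker U(d)$, relying on the two preparatory observations already in place: that the $H_\ell^{(s)}$ act diagonally (so their simultaneous kernel in degree $d$ is exactly $W(d;0)$), and Lemma~\ref{U-maps}, which records that each $U_i^{(1)}$, $U_j^{(2)}$, $U_k^{(3)}$ carries $W(d;0)$ into the specific higher weight space appearing as a summand of the codomain of $U(d)$.

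For the forward direction I would argue directly. If $f \in \mathbb{C}[x_{ijk}]$ is homogeneous of degree $d$ and satisfies $D \cdot f = 0$ for every $D \in \mathfrak{sl}_{pqr}(\mathbb{C})$, then in particular $H_\ell^{(s)} \cdot f = 0$ for all simple Cartan generators in all three summands, which places $f$ in $W(d;0)$; and $U_\ell^{(s)} \cdot f = 0$ for all simple positive root vectors, which places $f$ in the intersection of the kernels of the component maps making up $U(d)$. Since $U(d)$ is defined as the direct sum of these component maps, $f \in \ker U(d)$.

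For the reverse direction I would invoke the elementary representation theory of semisimple Lie algebras already cited from Humphreys. Suppose $f \in W(d;0) \cap \ker U(d)$. Then $f$ is simultaneously annihilated by every $H_\ell^{(s)}$ (since it has weight zero) and by every $U_\ell^{(s)}$ (since it lies in the nullspace of each component of $U(d)$). Thus $f$ is a maximal vector of weight $0$ for each of the three simple summands, and hence for the semisimple algebra $\mathfrak{sl}_{pqr}(\mathbb{C})$. Because every finite-dimensional graded piece is completely reducible, the submodule generated by $f$ is irreducible of highest weight $0$, i.e.\ the trivial module; therefore $D \cdot f = 0$ for every $D$, and $f$ is an invariant.

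The one step that carries any real content is the reverse direction, specifically the passage from ``$f$ is annihilated by all simple Cartan and simple positive root vectors'' to ``$f$ is annihilated by the entire Lie algebra.'' This is the standard highest-weight argument for semisimple Lie algebras and the only place where something beyond routine unwinding of definitions is used; everything else is a direct translation through Lemmas 1--3 and the definition of $U(d)$.
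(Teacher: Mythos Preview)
Your proposal is correct and matches the paper's own approach: the paper does not give a formal proof of this theorem, but states in the text preceding the lemmas that ``by elementary representation theory, it suffices to assume that $H_\ell \cdot f = 0$ and $E_\ell \cdot f = 0$ for all three summands,'' which is precisely your reverse-direction argument via highest-weight vectors, and the forward direction is immediate. The only small point you leave implicit is that annihilation by the \emph{simple} positive root vectors forces annihilation by all positive root vectors (since $\mathfrak{n}^+$ is generated as a Lie algebra by the simple root vectors and $[X,Y]\cdot f = X\cdot(Y\cdot f) - Y\cdot(X\cdot f)$); this is standard and does not affect the validity of your argument.
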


We quote the following formula for the degree of the hyperdeterminant of a 3-dimensional array
whose dimensions belong to the ``sub-boundary'' format.

\begin{lemma} \label{degreelemma}
\emph{(Gelfand et al. \cite{GKZ}, Corollary 3.8)}
The hyperdeterminant of an array of size $p \times q \times r$ where $p \ge q \ge r$ and $p = q+r-2$ has degree
  \[
  2 \binom{q+r-2}{q-1} (q-1) (r-1).
  \]
In particular, for $p = q$ and $r = 2$ we obtain $2p(p-1)$.
\end{lemma}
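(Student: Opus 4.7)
The statement comes in two parts: a general degree formula for the sub-boundary format, and its specialization to $p=q$, $r=2$. My plan is to cite the first part rather than reprove it, and to verify the second part by direct substitution.

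For the general formula, the argument in GKZ proceeds from their master generating function for the degree $N(k_1,\dots,k_n)$ of the hyperdeterminant of a $k_1 \times \cdots \times k_n$ array, which expresses
\[
\sum_{k_1,\dots,k_n \ge 1} N(k_1,\dots,k_n) \, z_1^{k_1-1} \cdots z_n^{k_n-1}
\]
as the reciprocal of the square of an explicit polynomial in the elementary symmetric functions of $-z_1,\dots,-z_n$. For three-dimensional arrays one extracts the coefficient of $z_1^{p-1} z_2^{q-1} z_3^{r-1}$; imposing the sub-boundary condition $p = q+r-2$ causes the resulting sum to collapse into the claimed single-binomial expression. The main obstacle, were one to reprove Corollary~3.8 from scratch, is deriving the master generating function itself, which relies on GKZ's computation of the degree of the dual of the Segre variety $\mathbb{P}^{p-1} \times \mathbb{P}^{q-1} \times \mathbb{P}^{r-1}$; that machinery is substantial enough to fall well outside the scope of the present paper, so quoting the result is the sensible course.

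For the specialization, the plan is simply to set $p = q$ and $r = 2$ in the formula. The sub-boundary condition $p = q+r-2$ becomes $p = q$, which is automatically satisfied, and
\[
2\binom{q+r-2}{q-1}(q-1)(r-1) = 2\binom{q}{q-1}(q-1)(1) = 2q(q-1) = 2p(p-1),
\]
as claimed. In particular, this tells us the hyperdeterminant of a $3 \times 3 \times 2$ array has degree $12$, pinpointing the degree in which the invariant must be sought in the computations to follow.
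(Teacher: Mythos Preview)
Your proposal is correct and matches the paper's approach exactly: the paper does not prove this lemma at all but simply quotes it from GKZ, and the specialization to $p=q$, $r=2$ is stated without even showing the substitution you wrote out. Your added sketch of the GKZ generating-function argument and the explicit one-line check of the specialization are harmless elaborations of what the paper leaves implicit.
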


We now specialize to $(p,q,r) = (3,3,2)$ until Section \ref{section442}; in this case the hyperdeterminant
has degree 12.

\begin{definition}
Let $B(d;\Omega)$ be the set of all exponent arrays $E$ for monomials $X(E)$ of degree $d$ and weight $\Omega$.
For $\Omega = 0$, we consider the \textbf{symmetry group} of $B(d;0)$, which is the semidirect product
$G = ( S_3 \times S_3 \times S_2 ) \rtimes S_2$.
The factors in the normal subgroup permute the slices in the three directions;
the remaining $S_2$ transposes the first and second $S_3$.  More precisely, if
$g = ( \alpha, \beta, \gamma, \delta ) \in G$,
then we have:
  \begin{align*}
  &
  ( \alpha \cdot E )_{i,j,k} = e_{\alpha(i),j,k},
  \qquad
  ( \beta \cdot E )_{i,j,k} = e_{i,\beta(j),k},
  \qquad
  ( \gamma \cdot E )_{i,j,k} = e_{i,j,\gamma(k)},
  \\
  &
  ( \delta \cdot E )_{ijk} = \begin{cases} e_{ijk} &\delta = () \\ e_{jik} &\delta = (12) \end{cases}
  \end{align*}
\end{definition}

\begin{definition}
The \textbf{flattening} of an exponent array $E = ( e_{ijk} )$ is the ordered list obtained by
applying lex order to the subscripts:
  \[
  \mathrm{flat}(E) = [ e_{111}, e_{112}, e_{121}, e_{122}, e_{131}, e_{132}, \dots, e_{331}, e_{332} ].
  \]
The \textbf{total order} on exponent arrays is defined to be the lex order on flattenings: that is, $E < E'$
for $\mathrm{flat}(E) = [ f_1, f_2, \dots, f_{18} ]$ and $\mathrm{flat}(E') = [ f'_1, f'_2, \dots, f'_{18} ]$
if and only if $f_i < f'_i$ where $i$ is the least index with $f_i \ne f'_i$.  The \textbf{matrix form}
of $E$ is:
  \[
  \left[ \begin{array}{ccc|ccc}
  e_{111} & e_{121} & e_{131} & e_{112} & e_{122} & e_{132} \\
  e_{211} & e_{221} & e_{231} & e_{212} & e_{222} & e_{232} \\
  e_{311} & e_{321} & e_{331} & e_{312} & e_{322} & e_{332}
  \end{array} \right]
  \]
\end{definition}


\section{Degree 6: no invariants}

We show that there are no invariant polynomials in degree 6
for $3 \times 3 \times 2$ arrays.  The next two results can be obtained easily
with a computer algebra system.

\begin{lemma}
In degree 6 there are 288 exponent arrays $E$ for monomials $X(E)$ of weight zero, forming 8 orbits under
the action of the symmetry group $G$.  The sizes of these orbits, and their minimal representatives in
matrix form, are:
  \allowdisplaybreaks
  \begin{alignat*}{4}
  &
  36
  &\quad
  \left[ \begin{array}{ccc|ccc}
  0 & 0 & 0 & 0 & 0 & 2 \\
  0 & 1 & 0 & 0 & 1 & 0 \\
  2 & 0 & 0 & 0 & 0 & 0
  \end{array} \right]
  &\qquad\qquad
  &
  72
  &\quad
  \left[ \begin{array}{ccc|ccc}
  0 & 0 & 0 & 0 & 0 & 2 \\
  0 & 1 & 0 & 1 & 0 & 0 \\
  1 & 1 & 0 & 0 & 0 & 0
  \end{array} \right]
  \\
  &
  6
  &\quad
  \left[ \begin{array}{ccc|ccc}
  0 & 0 & 1 & 0 & 0 & 1 \\
  0 & 1 & 0 & 0 & 1 & 0 \\
  1 & 0 & 0 & 1 & 0 & 0
  \end{array} \right]
  &\qquad\qquad
  &
  36
  &\quad
  \left[ \begin{array}{ccc|ccc}
  0 & 0 & 1 & 0 & 0 & 1 \\
  0 & 0 & 0 & 1 & 1 & 0 \\
  1 & 1 & 0 & 0 & 0 & 0
  \end{array} \right]
  \\
  &
  18
  &\quad
  \left[ \begin{array}{ccc|ccc}
  0 & 0 & 1 & 0 & 0 & 1 \\
  0 & 1 & 0 & 1 & 0 & 0 \\
  1 & 0 & 0 & 0 & 1 & 0
  \end{array} \right]
  &\qquad\qquad
  &
  72
  &\quad
  \left[ \begin{array}{ccc|ccc}
  0 & 0 & 0 & 0 & 1 & 1 \\
  0 & 0 & 1 & 1 & 0 & 0 \\
  1 & 1 & 0 & 0 & 0 & 0
  \end{array} \right]
  \\
  &
  36
  &\quad
  \left[ \begin{array}{ccc|ccc}
  0 & 0 & 0 & 0 & 1 & 1 \\
  1 & 0 & 0 & 0 & 0 & 1 \\
  1 & 1 & 0 & 0 & 0 & 0
  \end{array} \right]
  &\qquad\qquad
  &
  12
  &\quad
  \left[ \begin{array}{ccc|ccc}
  0 & 0 & 1 & 0 & 1 & 0 \\
  0 & 1 & 0 & 1 & 0 & 0 \\
  1 & 0 & 0 & 0 & 0 & 1
  \end{array} \right]
  \end{alignat*}
\end{lemma}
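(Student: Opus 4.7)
The plan is to reduce the statement to a finite enumeration, carried out by computer, and then to a finite orbit computation.

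First I would invoke the characterization of $W(6;0)$ from the second lemma (the equal parallel slice property): an exponent array $E = (e_{ijk})$ of size $3 \times 3 \times 2$ and entry sum $6$ lies in the zero weight space if and only if
\[
T_1(i) = \sum_{j,k} e_{ijk} = 2 \;\;(i=1,2,3), \qquad T_2(j) = \sum_{i,k} e_{ijk} = 2 \;\;(j=1,2,3), \qquad T_3(k) = \sum_{i,j} e_{ijk} = 3 \;\;(k=1,2).
\]
Writing $E = (M_1 \mid M_2)$ where $M_k = (e_{ij k})$ is the $k$-th $3 \times 3$ slice, these conditions are: $M_1$ and $M_2$ are $3 \times 3$ matrices of nonnegative integers, each with entry sum $3$, and $N := M_1 + M_2$ is a $3 \times 3$ nonnegative integer matrix with all row and column sums equal to $2$.

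Next I would enumerate. The matrices $N$ with row/column sums $2$ are exactly the sums of two (not necessarily distinct) $3 \times 3$ permutation matrices, a small finite set that a computer algebra system produces directly. For each such $N$, I would enumerate the matrices $M_1$ satisfying $0 \le (M_1)_{ij} \le N_{ij}$ entrywise with $\sum_{ij} (M_1)_{ij} = 3$, then set $M_2 = N - M_1$. Summing over all $N$ should yield exactly $288$ arrays; this is the first claim.

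Finally I would compute orbits under $G = (S_3 \times S_3 \times S_2) \rtimes S_2$, which has order $144$ and acts by the explicit rules in the preceding definition (row permutations on $i$, column permutations on $j$, slice interchange on $k$, and the outer involution $(\alpha,\beta,\gamma,\delta)$ with $\delta = (12)$ that transposes both slices simultaneously). For each of the $288$ arrays I would compute its lex-minimal representative (using the total order on flattenings defined above), partition the arrays by this canonical form, and check that the resulting $8$ orbits have the stated sizes $36, 72, 6, 36, 18, 72, 36, 12$ (summing to $288$) and the stated minimal representatives. Each orbit size divides $144$, giving stabilizer orders $4,2,24,4,8,2,4,12$ as a consistency check; for example, the size-$6$ orbit of the permutation-like array has stabilizer of order $24$ because that array is invariant under the diagonal action of $S_3$ together with a slice-and-transpose symmetry, which is easy to verify by inspection.

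There is no real theoretical obstacle: the claim is purely enumerative and the slice decomposition makes the search space small. The only place where a mistake is likely is in implementing the semidirect product action correctly, in particular ensuring that the outer $S_2$ acts as $e_{ijk} \mapsto e_{jik}$ (simultaneous transposition of both slices) rather than as a transposition of $M_1$ and $M_2$; provided the action is coded from the definition, the lex-minimal canonicalization yields the orbit decomposition above.
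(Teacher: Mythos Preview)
Your proposal is correct and matches the paper's own treatment: the paper gives no argument beyond the remark that the result ``can be obtained easily with a computer algebra system,'' so the content is a finite enumeration followed by an orbit computation, exactly as you outline. Your $N = M_1 + M_2$ decomposition (reducing to $3\times 3$ nonnegative integer matrices with all line sums $2$, then splitting into two slices of entry sum $3$) is a tidy way to shrink the search space, but it is an implementation refinement rather than a different method; the paper's Maple code for degree $12$ simply loops over all $18$-tuples with the slice-sum constraints and then canonicalizes under $G$, and the degree-$6$ case is the same in miniature.
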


\begin{lemma} \label{degree6higher}
In degree 6 there are 204 exponent arrays $E$ for monomials $X(E)$ of each weight
$[2,-1,0,0,0]$, $[-1,2,0,0,0]$, $[0,0,2,-1,0]$, $[0,0,-1,2,0]$,
and 225 of weight $[0,0,0,0,2]$.
The minimal representatives of these five sets are:
  \allowdisplaybreaks
  \begin{alignat*}{2}
  &
  \left[ \begin{array}{ccc|ccc}
  0 & 0 & 0 & 0 & 1 & 2 \\
  0 & 1 & 0 & 0 & 0 & 0 \\
  2 & 0 & 0 & 0 & 0 & 0
  \end{array} \right]
  &\qquad\qquad
  &
  \left[ \begin{array}{ccc|ccc}
  0 & 0 & 0 & 0 & 0 & 2 \\
  0 & 2 & 0 & 1 & 0 & 0 \\
  1 & 0 & 0 & 0 & 0 & 0
  \end{array} \right]
  \\
  &
  \left[ \begin{array}{ccc|ccc}
  0 & 0 & 0 & 0 & 0 & 2 \\
  0 & 1 & 0 & 1 & 0 & 0 \\
  2 & 0 & 0 & 0 & 0 & 0
  \end{array} \right]
  &\qquad\qquad
  &
  \left[ \begin{array}{ccc|ccc}
  0 & 0 & 0 & 0 & 1 & 1 \\
  0 & 1 & 0 & 0 & 1 & 0 \\
  2 & 0 & 0 & 0 & 0 & 0
  \end{array} \right]
  \\
  &
  \left[ \begin{array}{ccc|ccc}
  0 & 0 & 0 & 0 & 0 & 2 \\
  0 & 2 & 0 & 0 & 0 & 0 \\
  2 & 0 & 0 & 0 & 0 & 0
  \end{array} \right]
  \end{alignat*}
\end{lemma}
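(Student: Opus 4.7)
The plan is to reduce each weight specification to a list of two-dimensional slice sums using the first lemma of the paper, then enumerate the exponent arrays satisfying those slice sums, and finally pick out the lex-smallest flattening in each set. Since both statements in Lemma \ref{degree6higher} are purely numerical assertions about finite sets of small nonnegative integer arrays, the proof is an explicit enumeration of the kind the author indicates ``can be obtained easily with a computer algebra system.''

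For each weight $\Omega = (\Omega_1,\Omega_2,\Omega_3)$, Lemma 1 identifies the eigenvalues of $H_i^{(1)}, H_j^{(2)}, H_k^{(3)}$ with the successive differences of the slice sums $T_1,T_2,T_3$ defined in Lemma 3. Combined with the degree condition $\sum_{ijk} e_{ijk} = 6$, this pins down each $T_s$ uniquely. For example, weight $[2,-1,0,0,0]$ forces
\[
(T_1(1),T_1(2),T_1(3)) = (3,1,2), \qquad (T_2(1),T_2(2),T_2(3)) = (2,2,2), \qquad (T_3(1),T_3(2)) = (3,3),
\]
weight $[-1,2,0,0,0]$ gives $T_1 = (2,3,1)$ with the other slice sums unchanged, the two $[0,0,\pm,\mp,0]$ weights give the analogous permutations of $T_2$, and weight $[0,0,0,0,2]$ forces $T_3 = (4,2)$ while $T_1 = T_2 = (2,2,2)$.

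Next I would enumerate, for each such triple of slice-sum vectors, all $3 \times 3 \times 2$ arrays of nonnegative integers with the prescribed slice sums. The most efficient organization is to iterate over the two $3 \times 3$ horizontal slices $(e_{ij1})$ and $(e_{ij2})$: each slice is a nonnegative integer $3 \times 3$ matrix with prescribed row sums (obtained by intersecting $T_1$ with the chosen value of $T_3(k)$) and prescribed column sums (similarly from $T_2$), so it is a contingency table whose enumeration is standard, and the pair of slices must jointly give the correct $T_1$ and $T_2$ totals. Running through the cases yields the claimed counts 204, 204, 204, 204, 225.

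Finally, for the minimal representatives, I would sort each list by the flattening order and output the smallest. This last step is a direct inspection of the enumerated sets and requires no further argument beyond verifying that the displayed arrays have the correct slice sums (which can be read off from the pictures) and that no array of the same weight has a strictly smaller flattening; the latter is guaranteed by the lex search. The main potential obstacle is bookkeeping, in particular making sure the weight-to-slice-sum translation is applied correctly to each of the five weights and that the enumeration is exhaustive; both are eliminated by writing the search as a short loop in a computer algebra system, as the author suggests.
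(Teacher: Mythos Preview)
Your proposal is correct and matches the paper's approach: the paper offers no proof beyond the remark that these counts ``can be obtained easily with a computer algebra system,'' and your reduction of each weight to a fixed triple of slice-sum vectors followed by direct enumeration is exactly such a computation. One small wording issue: the individual $k$-slices do not themselves have row and column sums prescribed by $T_1$ and $T_2$ (only their \emph{sums} over $k$ are prescribed), so the contingency-table decomposition requires first choosing how each $T_1(i)$ and $T_2(j)$ splits across $k=1,2$; this is harmless bookkeeping and does not affect the validity of the enumeration.
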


\begin{proposition} \label{propdeg6}
There is no invariant polynomial in degree 6 for $3 \times 3 \times 2$ arrays.
\end{proposition}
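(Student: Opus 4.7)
The plan is to invoke the theorem identifying invariants in degree 6 with the nullspace of the raising map
\[
U(6)\colon W(6;0) \longrightarrow
W(6;\Gamma_1,0,0) \oplus W(6;\Gamma_2,0,0)
\oplus W(6;0,\Gamma_1,0) \oplus W(6;0,\Gamma_2,0)
\oplus W(6;0,0,\Gamma_1),
\]
and then show that this map is injective. By the two preceding lemmas, the source has dimension $288$ and the target has total dimension $4 \cdot 204 + 225 = 1041$, so there is room for $U(6)$ to be injective; the task is to certify this explicitly.

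Concretely, I would first enumerate the $288$ basis monomials of $W(6;0)$ by expanding each of the eight $G$-orbits listed in the previous lemma. Using the representatives in Lemma \ref{degree6higher}, I would likewise enumerate the bases of the five higher weight spaces; note that the stabilizers of the five nonzero weights in $G$ are strictly smaller than $G$ and differ among the five cases, so each codomain summand must be built separately. Applying the explicit derivation rules for $U_i^{(s)} \cdot X(E)$ from the earlier action lemma then produces the $1041 \times 288$ integer matrix of $U(6)$; its entries are bounded by $6$ in absolute value, since the only nonzero coefficients arise as partial derivatives of degree-$6$ monomials. A Gaussian elimination over $\mathbb{Q}$ or modulo a small prime computes its rank. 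If the rank equals $288$, then $U(6)$ is injective, its nullspace is zero, and Proposition \ref{propdeg6} follows.

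The only computationally interesting step is the rank calculation, and the matrix is small enough that this is nearly immediate. The real obstacle is bookkeeping: mis-indexing the codomain bases or miscomputing a derivation entry. To guard against this I would exploit the $G$-equivariance of the construction -- the outer $S_2$ of $G$ exchanges the two $\mathfrak{sl}_3$ factors and hence should swap the first two and next two summands of the codomain, while the slice-permutation normal subgroup permutes basis monomials within each summand -- and verify these symmetries on the assembled matrix before reducing. No further structural argument appears necessary: since the proposition asserts that no invariants exist, a single successful rank computation suffices.
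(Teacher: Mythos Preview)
Your proposal is correct and follows essentially the same approach as the paper: compute the integer matrix of the raising map $U(6)$ on the $288$ zero-weight monomials and verify by Gaussian elimination that its rank is $288$, so the nullspace is zero. The only difference is implementation detail---the paper processes the five raising operators one at a time, row-reducing after each into a $513\times 288$ workspace (and records the cumulative ranks $204,277,283,286,288$), whereas you assemble the full $1041\times 288$ matrix at once; your added $G$-equivariance sanity check is a nice touch not present in the paper.
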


\begin{proof}
We use the Maple package \texttt{LinearAlgebra} to create a zero matrix of size $513 \times 288$,
consisting of a $288 \times 288$ upper block and a $225 \times 288$ lower block.
The columns correspond to the ordered list of all weight zero monomials in degree 6.
The rows of the lower block correspond to the ordered lists of higher weight monomials.
We perform the following steps for each of the higher weights in the statement of Lemma \ref{degree6higher}
corresponding to the elements $U = U_1^{(1)}, U_2^{(1)}, U_1^{(2)}, U_2^{(2)}, U_1^{(3)}$:
  \begin{enumerate}
  \item For $j = 1, 2, \dots, 288$ compute the action of $U$ on the $j$-th weight zero monomial,
  obtain a linear combination of higher weight monomials, and store the coefficients in the
  appropriate rows of the lower block.
  \item Compute the row canonical form of the matrix using the Maple command \texttt{ReducedRowEchelonForm};
  the lower block is now zero.
  \end{enumerate}
At the end of this computation, the nullspace contains the coefficient vectors of the invariant polynomials.
During these five iterations, the rank of the matrix increases to 204, 277, 283, 286, 288;
hence the nullspace is zero.
(We emphasize that this computation was performed using rational arithmetic.)
\end{proof}


\section{Degree 12: the $3 \times 3 \times 2$ hyperdeterminant}

We show that there is a 1-dimensional space of invariant polynomials in degree 12.
We obtain an explicit formula for a basis element which is (up to sign) the hyperdeterminant of a
$3 \times 3 \times 2$ array in the sense of Gelfand et al. \cite{GKZ}.

\begin{lemma} \label{smithlemma}
Let $A$ be a matrix with entries in the ring $\mathbb{Z}$ of integers, let $r_0$ be its rank over
the field $\mathbb{Q}$ of rational numbers, and let $r_p$ be its rank over the field $\mathbb{F}_p$
with $p$ elements.  Then $r_p \le r_0$ for every prime $p$.  Hence the dimension of the nullspace
of $A$ over $\mathbb{Q}$ is no larger than the dimension of the nullspace over $\mathbb{F}_p$.
\end{lemma}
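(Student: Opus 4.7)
The plan is to establish the rank inequality $r_p \le r_0$ using the determinantal characterization of rank, then deduce the nullspace bound as an immediate consequence of rank-nullity. Both facts are very standard; the content of the lemma is just the useful observation that reducing modulo a prime can only collapse rank, never increase it, which is what allows the author to upper-bound the rational nullspace dimension by computing a rank over $\mathbb{F}_p$.

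For the rank inequality, I would invoke the classical fact that over any field $K$ the rank of a matrix equals the largest $k$ for which some $k \times k$ submatrix has nonzero determinant. Applied over $\mathbb{F}_p$, there must exist an $r_p \times r_p$ submatrix of $A$ whose determinant is nonzero in $\mathbb{F}_p$. Since $A$ has integer entries, that determinant is an integer, and being nonzero modulo $p$ forces it to be a nonzero integer, hence nonzero in $\mathbb{Q}$. This exhibits a nonvanishing $r_p \times r_p$ minor of $A$ over $\mathbb{Q}$, giving $r_0 \ge r_p$.

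For the nullspace statement, I apply rank-nullity over each of the two fields: if $n$ is the number of columns of $A$, then $\dim_{\mathbb{F}_p} \ker(A) = n - r_p \ge n - r_0 = \dim_\mathbb{Q} \ker(A)$.

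There is no genuine obstacle here. An equally clean alternative argument uses the Smith normal form $A = P D Q$ with $P, Q$ unimodular over $\mathbb{Z}$ and $D$ diagonal with invariant factors $d_1 \mid d_2 \mid \cdots \mid d_{r_0}$; because $\det P, \det Q = \pm 1$ are units in $\mathbb{F}_p$, the reductions of $P$ and $Q$ are still invertible, so the rank of $A$ modulo $p$ equals the number of $d_i$ not divisible by $p$, which is at most $r_0$.
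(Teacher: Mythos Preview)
Your proof is correct. Your primary argument via the determinantal characterization of rank is a different route from the paper's: the paper works entirely with the Smith normal form, observing that the integer Smith form of $A$ has exactly $r_0$ nonzero diagonal entries, and that reducing modulo $p$ kills precisely those divisible by $p$, yielding $r_p = r_0 - k \le r_0$. Your minor-based argument is arguably more elementary (it avoids the existence theorem for the Smith form and needs only that a nonzero integer is nonzero in $\mathbb{Q}$), while the paper's argument gives the slightly sharper information that $r_p$ equals $r_0$ minus the number of invariant factors divisible by $p$. You do sketch exactly the paper's approach as your ``equally clean alternative,'' so in that sense you have anticipated it; either line is sufficient for the lemma as stated, and the nullspace conclusion follows by rank--nullity just as you say.
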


\begin{proof}
The Smith normal form $S^{(0)} = (s^0_{ij})$ over $\mathbb{Q}$ satisfies $s^0_{ii} = 1$ for
$1 \le i \le r_0$, other entries 0.  Hence the Smith normal form $T = (t_{ij})$ over $\mathbb{Z}$
satisfies $t_{ii} \in \mathbb{Z}$, $t_{ii} > 0$ for $1 \le i \le r_0$, other entries 0.
If $k \ge 0$ denotes the number of $t_{ii}$ which are divisible by $p$, then the Smith normal form
$S^{(p)} = (s^p_{ij})$ over $\mathbb{F}_p$ satisfies $s^p_{ii} = 1$ for $1 \le i \le r_0-k$,
other entries 0.  But clearly $S^{(p)}$ satisfies $s^p_{ii} = 1$ for $1 \le i \le r_p$, other entries 0.
Hence $r_p = r_0 - k$.
\end{proof}

\begin{theorem}
An explicit formula for the $3 \times 3 \times 2$ hyperdeterminant is displayed in Tables
\ref{longtable1}--\ref{longtable5}: an irreducible polynomial of degree 12 in 18 variables with
16749 monomials and 41 distinct integer coefficients; the monomials belong to 178 orbits under
the action of the symmetry group $(S_3 \times S_3 \times S_2) \rtimes S_2$.
\end{theorem}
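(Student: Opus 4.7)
The plan is to extend the approach of Proposition \ref{propdeg6} from degree $6$ to degree $12$ and then extract an explicit generator $I$ of the invariant subspace. By the theorem preceding Lemma \ref{degreelemma}, the degree-12 invariants are the nullspace of $U(12)$, so the task is to show this nullspace is one-dimensional and to produce $I$ together with the claimed numerical statistics.

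First I would enumerate $B(12;0)$ by running over all $3 \times 3 \times 2$ nonnegative integer arrays with entry sum $12$ and the equal parallel slice property (slice sums equal to $4$, $4$, and $6$ in the three directions), and similarly enumerate the five higher weight bases of Lemma \ref{U-maps}. Next I would assemble the block matrix of $U(12)$, whose column indexed by $E \in B(12;0)$ records the expansion of each $U \cdot X(E)$ in the higher weight monomial bases. Because the column dimension will be in the thousands, exact rational row reduction is impractical, but Lemma \ref{smithlemma} permits me to compute the rank over a single finite field $\mathbb{F}_p$: if the mod-$p$ rank equals $|B(12;0)|-1$, then the rational nullspace has dimension at most one, and producing any nonzero rational invariant pins its dimension at exactly one.

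To obtain an integer coefficient vector for $I$, I would exploit the action of $G = (S_3 \times S_3 \times S_2) \rtimes S_2$, which normalizes $\mathfrak{sl}_{332}(\mathbb{C})$ and hence acts on the one-dimensional invariant subspace by a sign character; a posteriori this character is trivial, so $I$ is a combination of $G$-orbit sums. Restricting the computation to the $178$-dimensional span of $G$-orbit sums reduces the nullspace problem to a manageable size in exact rational arithmetic, and expanding the resulting orbit sums back to monomials then yields the claimed $16749$ monomials with $41$ distinct integer coefficients. Irreducibility is immediate: by Lemma \ref{degreelemma} the GKZ hyperdeterminant has degree $12$ and so lies in the same one-dimensional invariant subspace, hence equals $I$ up to sign. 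The main obstacle is computational scale: the mod-$p$ rank of a matrix with several thousand columns and even more rows, together with the need to confirm that the $G$-reduction has not excluded the invariant (best cross-checked by comparing the rank of the unreduced matrix with that of the reduced one).
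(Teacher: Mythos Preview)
Your plan matches the paper's proof in all essentials: enumerate $B(12;0)$ and the five higher-weight bases, build the matrix of $U(12)$, use a mod-$p$ rank computation together with Lemma~\ref{smithlemma} to bound the rational nullspace by $1$, and then exhibit a nonzero integral invariant and its orbit decomposition. The one substantive difference is in how the characteristic-zero witness is produced. The paper simply lifts the mod-$p$ nullspace vector using symmetric representatives (which works because all coefficients have absolute value at most $104 < p/2 = 504$), observes that the lifted coefficients are constant on $G$-orbits, and then checks by direct integer arithmetic that each $U_i^{(\ell)}$ annihilates this candidate. You instead propose to pass to the $178$-dimensional subspace of $G$-orbit sums and solve the resulting small linear system exactly over $\mathbb{Q}$. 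Your route avoids the tacit assumption that the true integer coefficients are small enough to be recovered from their residues; the paper's route avoids having to justify in advance that the invariant is $G$-fixed, though as you note this is harmless once $\dim\le 1$ is known, since finding any nonzero element in the symmetric subspace simultaneously pins down the invariant and shows the $G$-character is trivial. Your irreducibility argument, identifying $I$ with the GKZ hyperdeterminant via Lemma~\ref{degreelemma} and the one-dimensionality of the invariant space, is correct and in fact fills a step the paper leaves implicit.
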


\begin{proof}
The Maple code in Table \ref{mapleweightzero} generates the 16749 exponent arrays for
zero weight monomials in degree 12.  (\texttt{unflatten} converts an array
in list format to 3-dimensional table format.)
The Maple code in Table \ref{mapleorbits} computes the orbits of the exponent arrays for the action of the symmetry group.
(\texttt{weightzeroindex} does a binary search in the list of zero weight arrays;
\texttt{flatten} converts an array in table format to list format.)
The sizes and minimal representatives of the 178 orbits appear in Tables \ref{longtable1}--\ref{longtable5}.
The Maple code in Table \ref{maplehigherweight} computes the exponent arrays of higher weight resulting
from the action of the elements $U_1^{(1)}$, $U_2^{(1)}$; cases $U_1^{(2)}$, $U_2^{(2)}$, $U_1^{(3)}$ are
similar. The action of these elements on each monomial of weight zero is stored in \texttt{imagevectors}.
Each of the first four higher weights has 14442 monomials; the fifth has 15039 monomials.

We use the Maple package \texttt{LinearAlgebra[Modular]} to create a zero matrix of size $31788 \times 16749$,
consisting of a $16749 \times 16749$ upper block and a $15039 \times 16749$ lower block.
The columns correspond to the ordered list of all zero weight monomials in degree 12.
The rows of the lower block correspond to the ordered lists of higher weight monomials.
Following the same algorithm as in the proof of Proposition \ref{propdeg6}, we use modular arithmetic with $p = 1009$
to compute the nullspace of the matrix; see the Maple code in Table \ref{maplebigmat} for $U_1^{(1)}, U_2^{(1)}$
(other cases similar).
(\texttt{higherweightindex} does a binary search in the list of higher weight arrays.)
Modular arithmetic is essential in order to control memory usage for such a large matrix.
The five higher weights give cumulative ranks 14442, 16673, 16727, 16744, 16748;
thus the nullspace is 1-dimensional.  Using symmetric representatives modulo $p$, there are 41 distinct coefficients
in the canonical basis vector, and we verify that the coefficients are constant on orbits.
The coefficients, their multiplicities, and the corresponding orbits are displayed in Table \ref{tablecoefficients}.

Our last task is to verify this result in characteristic 0.
By Lemma \ref{smithlemma} we know that the dimension of the nullspace over $\mathbb{Q}$ is at most 1.
We create an ordered list of lists, \texttt{orbitlist}, containing the indices of weight zero monomials
in each orbit, and an ordered list, \texttt{invariant}, of the coefficients as a function of the orbit index.
We use integer arithmetic to confirm that the elements $U_1^{(1)}$, $U_2^{(1)}$, $U_1^{(2)}$, $U_2^{(2)}$, $U_1^{(3)}$
annihilate the basis vector of the nullspace.
See Table \ref{mapleinteger} for $U_1^{(1)}$, $U_2^{(1)}$; the other cases are similar.
(\texttt{compress} collects terms with equal monomials.)
In each case the final result is the empty list, and this completes the proof.
\end{proof}


\section{The simplest invariant of a $4 \times 4 \times 2$ array} \label{section442}

Similar computations give the following result.
(This invariant is not the hyperdeterminant, which has degree 24 by Lemma \ref{degreelemma}.)

\begin{theorem}
Every homogeneous invariant polynomial in the entries of a $4 \times 4 \times 2$ array
has degree a multiple of 4.  In degree 0 there is only the trivial invariant 1,
in degree 4 there are no invariants, and in degree 8 there is a 1-dimensional space of
invariants.  A basis of the invariants in degree 8 consists of the polynomial
displayed in Table \ref{table442} which has 32 variables, 14148 monomials, and 13 distinct
integer coefficients constant on 28 orbits under the action of
$(S_4 \times S_4 \times S_2) \rtimes S_2$.
\end{theorem}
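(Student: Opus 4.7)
The plan is to reuse the computational framework built for the $3 \times 3 \times 2$ case, now applied with $(p,q,r) = (4,4,2)$. The divisibility claim is immediate: by the weight-zero lemma, an invariant in degree $d$ has equal parallel-slice sums in each of the three directions, so $p \mid d$, $q \mid d$, $r \mid d$, forcing $d$ to be a multiple of $\mathrm{LCM}(4,4,2) = 4$. This reduces the analysis to degrees $0$, $4$, $8$, with degree $0$ contributing only the constant $1$.

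For degree $4$ I would enumerate the exponent arrays in $B(4;0)$ together with the arrays in each higher weight space $W(4;\Gamma_i,0,0)$, $W(4;0,\Gamma_j,0)$, $W(4;0,0,\Gamma_k)$ for $i,j \in \{1,2,3\}$ and $k = 1$; this gives seven $U$-blocks. I would assemble the matrix of $U(4)$ and compute its rank over $\mathbb{Q}$ using the rational-arithmetic algorithm of Proposition \ref{propdeg6}, expecting the rank to equal the column dimension so that the nullspace is trivial. For degree $8$ the matrix becomes far larger: many thousands of zero-weight monomials as columns and many thousands more higher-weight monomials as rows across the seven blocks. To keep the computation feasible I would follow the degree $12$ strategy, working modulo a small prime (for example $p = 1009$) via \texttt{LinearAlgebra[Modular]}, so that by Lemma \ref{smithlemma} the rational nullspace dimension is bounded above by the modular one. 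The target is to see the cumulative ranks, after adding each of the seven $U$-blocks, leave exactly a $1$-dimensional modular nullspace.

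From that nullspace vector I would read off coefficients using symmetric representatives modulo $p$, verify that monomials in the same orbit under $(S_4 \times S_4 \times S_2) \rtimes S_2$ receive equal coefficients (yielding the $28$ orbits and $13$ distinct integer values recorded in Table \ref{table442}), and then run a final integer-arithmetic pass applying each of $U_1^{(1)}, U_2^{(1)}, U_3^{(1)}, U_1^{(2)}, U_2^{(2)}, U_3^{(2)}, U_1^{(3)}$ to the candidate polynomial, collecting like terms, and checking that the result is identically zero. Combined with the Smith-form bound, this certifies a $1$-dimensional space of invariants in degree $8$ over $\mathbb{Q}$ and identifies the stated basis element. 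The main obstacle is the scale of the degree-$8$ linear algebra: enumeration and sorting of the weight spaces (so that binary search into the higher-weight lists remains affordable) and the memory footprint of the $U(8)$ matrix. As in the $3 \times 3 \times 2$ case, modular arithmetic is what makes the bulk rank computation tractable, with exact verification reserved for the much smaller task of applying $U$-operators to the single explicit candidate invariant.
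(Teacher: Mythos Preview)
Your proposal is correct and matches the paper's own approach, which is simply ``similar computations'' to the $3 \times 3 \times 2$ case: enumerate weight-zero and higher-weight exponent arrays, assemble the seven $U$-blocks (three each for the two copies of $\mathfrak{sl}_4$ and one for $\mathfrak{sl}_2$), compute the nullspace modulo a prime, and confirm the single candidate over $\mathbb{Z}$ using Lemma~\ref{smithlemma}. You have identified all the essential ingredients, including the correct count of $U$-operators and the divisibility argument via $\mathrm{LCM}(4,4,2)$.
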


In principle this method could also be used to study the invariants of degree 12 for a $4 \times 4 \times 2$ array,
but in this case there are 677232 monomials of weight zero.


\begin{table}
\begin{verbatim}
N := 12:
weightzero := {}:
for e[1] from 0 to N/3 do
for e[2] from 0 to N/3-e[1] do
for e[3] from 0 to N/3-add(e[p],p=1..2) do
for e[4] from 0 to N/3-add(e[p],p=1..3) do
for e[5] from 0 to N/3-add(e[p],p=1..4) do
e[6] := N/3-add(e[p],p=1..5):
  for e[ 7] from 0 to N/3 do
  for e[ 8] from 0 to N/3-e[7] do
  for e[ 9] from 0 to N/3-add(e[p],p=7.. 8) do
  for e[10] from 0 to N/3-add(e[p],p=7.. 9) do
  for e[11] from 0 to N/3-add(e[p],p=7..10) do
  e[12] := N/3-add(e[p],p=7..11):
    for e[13] from 0 to N/3 do
    for e[14] from 0 to N/3-e[13] do
    for e[15] from 0 to N/3-add(e[p],p=13..14) do
    for e[16] from 0 to N/3-add(e[p],p=13..15) do
    for e[17] from 0 to N/3-add(e[p],p=13..16) do
    e[18] := N/3-add(e[p],p=13..17):
    ee := unflatten( [seq(e[p],p=1..18)] ):
    isums := { seq(add(add(ee[i,j,k],k=1..2),j=1..3),i=1..3) }:
    jsums := { seq(add(add(ee[i,j,k],k=1..2),i=1..3),j=1..3) }:
    ksums := { seq(add(add(ee[i,j,k],j=1..3),i=1..3),k=1..2) }:
    if nops(isums)=1 and nops(jsums)=1 and nops(ksums)=1 then
      weightzero := weightzero union { [seq(e[p],p=1..18)] } fi
od od od od od od od od od od od od od od od:

\end{verbatim}
\caption{Maple code to generate exponent arrays of weight zero}
\label{mapleweightzero}
\begin{verbatim}
higherweight := table():
imagevectors := table():
for m to 2 do
  higherweight[1,m] := {}:
  for n to nops(weightzero) do
    x := unflatten( weightzero[n] ):
    imagevectors[1,m,n] := []:
    for j to 3 do for k to 2 do
      if x[m+1,j,k] >= 1 then
        xx := copy(x): xx[m+1,j,k] := xx[m+1,j,k] - 1:
        xx[m,j,k] := xx[m,j,k] + 1: xx := flatten( xx ):
        higherweight[1,m] := higherweight[1,m] union { xx }:
        imagevectors[1,m,n] :=
          [ op(imagevectors[1,m,n]), [ x[m+1,j,k], xx ] ]
      fi
od od od od:

\end{verbatim}
\caption{Maple code to generate exponent arrays of higher weight}
\label{maplehigherweight}
\end{table}


\begin{table}
\begin{verbatim}
groupaction := proc( g, x )
  local a, b, c, d, gx, i, j, k, x0, x1, x2, x3:
  a,b,c,d := op( g ): x0 := unflatten( x ):
  for i to 3 do for j to 3 do for k to 2 do
    x1[ a[i], j, k ] := x0[ i, j, k ] od od od:
  for i to 3 do for j to 3 do for k to 2 do
    x2[ i, b[j], k ] := x1[ i, j, k ] od od od:
  for i to 3 do for j to 3 do for k to 2 do
    x3[ i, j, c[k] ] := x2[ i, j, k ] od od od:
  if d = [1,2] then gx := x3 else
    for i to 3 do for j to 3 do for k to 2 do
      gx[ i, j, k ] := x3[ j, i, k ] od od od fi:
  RETURN( flatten(gx) )
end:
\end{verbatim}
\caption{Maple code for action of symmetry group}
\label{mapleaction}
\begin{verbatim}
S2 := combinat[permute](2): S3 := combinat[permute](3):
G := [seq(seq(seq(seq([a,b,c,d],d in S2),c in S2),b in S3),a in S3)]:
o := 0: orbitlist := []: indexlist := [seq(j,j=1..nops(weightzero))]:
while indexlist <> [] do
  o := o + 1: x := weightzero[indexlist[1]]: orbit := {}:
  for g in G do
    gx := groupaction( g, x ):
    j := weightzeroindex( gx, 1, nops(weightzero) ):
    orbit := orbit union { j }
  od:
  orbitlist := [ op(orbitlist), sort(convert(orbit,list)) ]:
  indexlist := sort(convert(convert(indexlist,set) minus orbit,list))
od:
\end{verbatim}
\caption{Maple code to compute orbits of exponent arrays}
\label{mapleorbits}
\begin{verbatim}
with( LinearAlgebra[Modular] ): PRIME := 1009:
bigcol := nops(weightzero):
bigrow := bigcol + nops(higherweight[3,1]):
bigmat := Create( PRIME, bigrow, bigcol, integer[] ):
for m to 2 do
  for j to bigcol do
    for cx in imagevectors[1,m,j] do
      c,x := op(cx):
      i := higherweightindex( x, 1, m, 1, nops(higherweight[1,m]) ):
      bigmat[bigcol+i,j] := ( bigmat[bigcol+i,j] + c ) mod PRIME
    od
  od:
  RowReduce(PRIME,bigmat,bigrow,bigcol,bigcol,0,0,'bigrank',0,0,true)
od:
MatBasis( PRIME, bigmat, bigrank, true ):

\end{verbatim}
\caption{Maple code to fill and reduce representation matrix}
\label{maplebigmat}
\end{table}


  \begin{table}
  \[
  \begin{array}{rrl}
  \text{coef} & \text{mult} & \text{orbits} \\
  -104 &   18 & 153 \\
   -68 &   36 & 150 \\
   -62 &   36 & 100 \\
   -38 &   36 & 92 \\
   -36 &   72 & 171 \\
   -34 &  144 & 86 \\
   -32 &   72 & 175 \\
   -30 &   24 & 173{,} 178 \\
   -27 &   12 & 146 \\
   -26 &  252 & 89{,} 134{,} 141{,} 147 \\
   -24 &  168 & 85{,} 93{,} 144 \\
   -22 &  288 & 108{,} 164 \\
   -20 &  432 & 71{,} 121{,} 129 \\
   -18 &  144 & 111 \\
   -16 &  369 & 28{,} 39{,} 158{,} 174 \\
   -12 &  648 & 32{,} 95{,} 97{,} 130{,} 136 \\
   -10 & 1116 & 9{,} 17{,} 58{,} 60{,} 79{,} 82{,} 101{,} 102{,} 118{,} 127{,} 137 \\
    -8 & 1098 & 13{,} 21{,} 27{,} 42{,} 45{,} 48{,} 50{,} 64{,} 73{,} 160{,} 165 \\
    -6 &  942 & 14{,} 19{,} 56{,} 65{,} 72{,} 76{,} 90{,} 94{,} 132 \\
    -4 & 1224 & 20{,} 26{,} 35{,} 46{,} 47{,} 54{,} 106{,} 154{,} 156{,} 159{,} 166 \\
    -2 & 1224 & 2{,} 3{,} 6{,} 36{,} 37{,} 53{,} 55{,} 57{,} 91{,} 155{,} 162{,} 170 \\
     1 &  216 & 1{,} 5{,} 114{,} 122 \\
     2 & 1656 & 10{,} 16{,} 22{,} 30{,} 33{,} 34{,} 41{,} 67{,} 68{,} 83{,} 87{,} 96{,} 104{,} 138 \\
     4 & 1332 & 4{,} 8{,} 12{,} 18{,} 29{,} 38{,} 40{,} 62{,} 77{,} 78{,} 84{,} 115{,} 116{,} 117{,} 123{,} 124{,} 125{,} 126{,} 143 \\
     6 &  972 & 11{,} 24{,} 31{,} 43{,} 44{,} 63{,} 99{,} 110{,} 168 \\
     8 &  540 & 7{,} 15{,} 66{,} 119{,} 128 \\
    10 &  612 & 25{,} 51{,} 52{,} 105{,} 169{,} 177 \\
    12 &  288 & 107{,} 109{,} 172 \\
    16 &  648 & 69{,} 70{,} 80{,} 120{,} 140{,} 148 \\
    18 &  792 & 61{,} 81{,} 88{,} 112{,} 133{,} 139{,} 142{,} 145 \\
    20 &  252 & 23{,} 49{,} 59 \\
    22 &  288 & 75{,} 131 \\
    24 &  144 & 161 \\
    26 &  144 & 113{,} 157 \\
    30 &   72 & 135 \\
    36 &  150 & 74{,} 98{,} 151{,} 152 \\
    38 &   72 & 167 \\
    44 &   72 & 149 \\
    50 &   72 & 103 \\
    54 &   36 & 163 \\
    76 &   36 & 176
  \end{array}
  \]
  \caption{Coefficients, multiplicities, and corresponding orbits}
  \label{tablecoefficients}
  \end{table}


  \begin{table}
  \[
  \begin{array}{rrrr|rrrr}
  \text{coef} & \text{representative}  & \text{size} & \#
  &
  \text{coef} & \text{representative}  & \text{size} & \#
  \\
     1 &
  {\tiny \left[ \begin{array}{ccc|ccc}
  0 &\!\!\! 0 &\!\!\! 0 &  0 &\!\!\! 0 &\!\!\! 4 \\
  0 &\!\!\! 2 &\!\!\! 0 &  0 &\!\!\! 2 &\!\!\! 0 \\
  4 &\!\!\! 0 &\!\!\! 0 &  0 &\!\!\! 0 &\!\!\! 0
  \end{array} \right] } &
   36 &
    1
  &
    -2 &
  {\tiny \left[ \begin{array}{ccc|ccc}
  0 &\!\!\! 0 &\!\!\! 0 &  0 &\!\!\! 0 &\!\!\! 4 \\
  0 &\!\!\! 3 &\!\!\! 0 &  0 &\!\!\! 1 &\!\!\! 0 \\
  3 &\!\!\! 0 &\!\!\! 0 &  1 &\!\!\! 0 &\!\!\! 0
  \end{array} \right] } &
   36 &
    2
  \\[8pt]
    -2 &
  {\tiny \left[ \begin{array}{ccc|ccc}
  0 &\!\!\! 0 &\!\!\! 0 &  0 &\!\!\! 0 &\!\!\! 4 \\
  0 &\!\!\! 2 &\!\!\! 0 &  1 &\!\!\! 1 &\!\!\! 0 \\
  3 &\!\!\! 1 &\!\!\! 0 &  0 &\!\!\! 0 &\!\!\! 0
  \end{array} \right] } &
  144 &
    3
  &
     4 &
  {\tiny \left[ \begin{array}{ccc|ccc}
  0 &\!\!\! 0 &\!\!\! 0 &  0 &\!\!\! 0 &\!\!\! 4 \\
  0 &\!\!\! 3 &\!\!\! 0 &  1 &\!\!\! 0 &\!\!\! 0 \\
  3 &\!\!\! 0 &\!\!\! 0 &  0 &\!\!\! 1 &\!\!\! 0
  \end{array} \right] } &
   36 &
    4
  \\[8pt]
     1 &
  {\tiny \left[ \begin{array}{ccc|ccc}
  0 &\!\!\! 0 &\!\!\! 0 &  0 &\!\!\! 0 &\!\!\! 4 \\
  0 &\!\!\! 2 &\!\!\! 0 &  2 &\!\!\! 0 &\!\!\! 0 \\
  2 &\!\!\! 2 &\!\!\! 0 &  0 &\!\!\! 0 &\!\!\! 0
  \end{array} \right] } &
   72 &
    5
  &
    -2 &
  {\tiny \left[ \begin{array}{ccc|ccc}
  0 &\!\!\! 0 &\!\!\! 0 &  0 &\!\!\! 0 &\!\!\! 4 \\
  1 &\!\!\! 1 &\!\!\! 0 &  0 &\!\!\! 2 &\!\!\! 0 \\
  3 &\!\!\! 1 &\!\!\! 0 &  0 &\!\!\! 0 &\!\!\! 0
  \end{array} \right] } &
   72 &
    6
  \\[8pt]
     8 &
  {\tiny \left[ \begin{array}{ccc|ccc}
  0 &\!\!\! 0 &\!\!\! 0 &  0 &\!\!\! 0 &\!\!\! 4 \\
  1 &\!\!\! 2 &\!\!\! 0 &  0 &\!\!\! 1 &\!\!\! 0 \\
  2 &\!\!\! 1 &\!\!\! 0 &  1 &\!\!\! 0 &\!\!\! 0
  \end{array} \right] } &
   36 &
    7
  &
     4 &
  {\tiny \left[ \begin{array}{ccc|ccc}
  0 &\!\!\! 0 &\!\!\! 0 &  0 &\!\!\! 0 &\!\!\! 4 \\
  1 &\!\!\! 1 &\!\!\! 0 &  1 &\!\!\! 1 &\!\!\! 0 \\
  2 &\!\!\! 2 &\!\!\! 0 &  0 &\!\!\! 0 &\!\!\! 0
  \end{array} \right] } &
   72 &
    8
  \\[8pt]
   -10 &
  {\tiny \left[ \begin{array}{ccc|ccc}
  0 &\!\!\! 0 &\!\!\! 0 &  0 &\!\!\! 0 &\!\!\! 4 \\
  1 &\!\!\! 2 &\!\!\! 0 &  1 &\!\!\! 0 &\!\!\! 0 \\
  2 &\!\!\! 1 &\!\!\! 0 &  0 &\!\!\! 1 &\!\!\! 0
  \end{array} \right] } &
   36 &
    9
  &
     2 &
  {\tiny \left[ \begin{array}{ccc|ccc}
  0 &\!\!\! 0 &\!\!\! 1 &  0 &\!\!\! 0 &\!\!\! 3 \\
  0 &\!\!\! 2 &\!\!\! 0 &  0 &\!\!\! 2 &\!\!\! 0 \\
  3 &\!\!\! 0 &\!\!\! 0 &  1 &\!\!\! 0 &\!\!\! 0
  \end{array} \right] } &
   36 &
   10
  \\[8pt]
     6 &
  {\tiny \left[ \begin{array}{ccc|ccc}
  0 &\!\!\! 0 &\!\!\! 1 &  0 &\!\!\! 0 &\!\!\! 3 \\
  0 &\!\!\! 1 &\!\!\! 0 &  1 &\!\!\! 2 &\!\!\! 0 \\
  3 &\!\!\! 1 &\!\!\! 0 &  0 &\!\!\! 0 &\!\!\! 0
  \end{array} \right] } &
  144 &
   11
  &
     4 &
  {\tiny \left[ \begin{array}{ccc|ccc}
  0 &\!\!\! 0 &\!\!\! 1 &  0 &\!\!\! 0 &\!\!\! 3 \\
  0 &\!\!\! 2 &\!\!\! 0 &  1 &\!\!\! 1 &\!\!\! 0 \\
  2 &\!\!\! 1 &\!\!\! 0 &  1 &\!\!\! 0 &\!\!\! 0
  \end{array} \right] } &
   72 &
   12
  \\[8pt]
    -8 &
  {\tiny \left[ \begin{array}{ccc|ccc}
  0 &\!\!\! 0 &\!\!\! 1 &  0 &\!\!\! 0 &\!\!\! 3 \\
  0 &\!\!\! 2 &\!\!\! 0 &  1 &\!\!\! 1 &\!\!\! 0 \\
  3 &\!\!\! 0 &\!\!\! 0 &  0 &\!\!\! 1 &\!\!\! 0
  \end{array} \right] } &
   72 &
   13
  &
    -6 &
  {\tiny \left[ \begin{array}{ccc|ccc}
  0 &\!\!\! 0 &\!\!\! 1 &  0 &\!\!\! 0 &\!\!\! 3 \\
  0 &\!\!\! 1 &\!\!\! 0 &  2 &\!\!\! 1 &\!\!\! 0 \\
  2 &\!\!\! 2 &\!\!\! 0 &  0 &\!\!\! 0 &\!\!\! 0
  \end{array} \right] } &
  144 &
   14
  \\[8pt]
     8 &
  {\tiny \left[ \begin{array}{ccc|ccc}
  0 &\!\!\! 0 &\!\!\! 1 &  0 &\!\!\! 0 &\!\!\! 3 \\
  0 &\!\!\! 2 &\!\!\! 0 &  2 &\!\!\! 0 &\!\!\! 0 \\
  2 &\!\!\! 1 &\!\!\! 0 &  0 &\!\!\! 1 &\!\!\! 0
  \end{array} \right] } &
   72 &
   15
  &
     2 &
  {\tiny \left[ \begin{array}{ccc|ccc}
  0 &\!\!\! 0 &\!\!\! 1 &  0 &\!\!\! 0 &\!\!\! 3 \\
  0 &\!\!\! 1 &\!\!\! 0 &  3 &\!\!\! 0 &\!\!\! 0 \\
  1 &\!\!\! 3 &\!\!\! 0 &  0 &\!\!\! 0 &\!\!\! 0
  \end{array} \right] } &
   72 &
   16
  \\[8pt]
   -10 &
  {\tiny \left[ \begin{array}{ccc|ccc}
  0 &\!\!\! 0 &\!\!\! 1 &  0 &\!\!\! 0 &\!\!\! 3 \\
  1 &\!\!\! 1 &\!\!\! 0 &  0 &\!\!\! 2 &\!\!\! 0 \\
  2 &\!\!\! 1 &\!\!\! 0 &  1 &\!\!\! 0 &\!\!\! 0
  \end{array} \right] } &
   72 &
   17
  &
     4 &
  {\tiny \left[ \begin{array}{ccc|ccc}
  0 &\!\!\! 0 &\!\!\! 1 &  0 &\!\!\! 0 &\!\!\! 3 \\
  1 &\!\!\! 1 &\!\!\! 0 &  1 &\!\!\! 1 &\!\!\! 0 \\
  1 &\!\!\! 2 &\!\!\! 0 &  1 &\!\!\! 0 &\!\!\! 0
  \end{array} \right] } &
   72 &
   18
  \\[8pt]
    -6 &
  {\tiny \left[ \begin{array}{ccc|ccc}
  0 &\!\!\! 0 &\!\!\! 2 &  0 &\!\!\! 0 &\!\!\! 2 \\
  0 &\!\!\! 2 &\!\!\! 0 &  0 &\!\!\! 2 &\!\!\! 0 \\
  2 &\!\!\! 0 &\!\!\! 0 &  2 &\!\!\! 0 &\!\!\! 0
  \end{array} \right] } &
    6 &
   19
  &
    -4 &
  {\tiny \left[ \begin{array}{ccc|ccc}
  0 &\!\!\! 0 &\!\!\! 2 &  0 &\!\!\! 0 &\!\!\! 2 \\
  0 &\!\!\! 0 &\!\!\! 0 &  1 &\!\!\! 3 &\!\!\! 0 \\
  3 &\!\!\! 1 &\!\!\! 0 &  0 &\!\!\! 0 &\!\!\! 0
  \end{array} \right] } &
   72 &
   20
  \\[8pt]
    -8 &
  {\tiny \left[ \begin{array}{ccc|ccc}
  0 &\!\!\! 0 &\!\!\! 2 &  0 &\!\!\! 0 &\!\!\! 2 \\
  0 &\!\!\! 1 &\!\!\! 0 &  1 &\!\!\! 2 &\!\!\! 0 \\
  2 &\!\!\! 1 &\!\!\! 0 &  1 &\!\!\! 0 &\!\!\! 0
  \end{array} \right] } &
   72 &
   21
  &
     2 &
  {\tiny \left[ \begin{array}{ccc|ccc}
  0 &\!\!\! 0 &\!\!\! 2 &  0 &\!\!\! 0 &\!\!\! 2 \\
  0 &\!\!\! 1 &\!\!\! 0 &  1 &\!\!\! 2 &\!\!\! 0 \\
  3 &\!\!\! 0 &\!\!\! 0 &  0 &\!\!\! 1 &\!\!\! 0
  \end{array} \right] } &
   72 &
   22
  \\[8pt]
    20 &
  {\tiny \left[ \begin{array}{ccc|ccc}
  0 &\!\!\! 0 &\!\!\! 2 &  0 &\!\!\! 0 &\!\!\! 2 \\
  0 &\!\!\! 2 &\!\!\! 0 &  1 &\!\!\! 1 &\!\!\! 0 \\
  2 &\!\!\! 0 &\!\!\! 0 &  1 &\!\!\! 1 &\!\!\! 0
  \end{array} \right] } &
   36 &
   23
  &
     6 &
  {\tiny \left[ \begin{array}{ccc|ccc}
  0 &\!\!\! 0 &\!\!\! 2 &  0 &\!\!\! 0 &\!\!\! 2 \\
  0 &\!\!\! 0 &\!\!\! 0 &  2 &\!\!\! 2 &\!\!\! 0 \\
  2 &\!\!\! 2 &\!\!\! 0 &  0 &\!\!\! 0 &\!\!\! 0
  \end{array} \right] } &
   36 &
   24
  \\[8pt]
    10 &
  {\tiny \left[ \begin{array}{ccc|ccc}
  0 &\!\!\! 0 &\!\!\! 2 &  0 &\!\!\! 0 &\!\!\! 2 \\
  0 &\!\!\! 1 &\!\!\! 0 &  2 &\!\!\! 1 &\!\!\! 0 \\
  1 &\!\!\! 2 &\!\!\! 0 &  1 &\!\!\! 0 &\!\!\! 0
  \end{array} \right] } &
   36 &
   25
  &
    -4 &
  {\tiny \left[ \begin{array}{ccc|ccc}
  0 &\!\!\! 0 &\!\!\! 2 &  0 &\!\!\! 0 &\!\!\! 2 \\
  0 &\!\!\! 1 &\!\!\! 0 &  2 &\!\!\! 1 &\!\!\! 0 \\
  2 &\!\!\! 1 &\!\!\! 0 &  0 &\!\!\! 1 &\!\!\! 0
  \end{array} \right] } &
   72 &
   26
  \\[8pt]
    -8 &
  {\tiny \left[ \begin{array}{ccc|ccc}
  0 &\!\!\! 0 &\!\!\! 2 &  0 &\!\!\! 0 &\!\!\! 2 \\
  0 &\!\!\! 2 &\!\!\! 0 &  2 &\!\!\! 0 &\!\!\! 0 \\
  2 &\!\!\! 0 &\!\!\! 0 &  0 &\!\!\! 2 &\!\!\! 0
  \end{array} \right] } &
   18 &
   27
  &
   -16 &
  {\tiny \left[ \begin{array}{ccc|ccc}
  0 &\!\!\! 0 &\!\!\! 2 &  0 &\!\!\! 0 &\!\!\! 2 \\
  1 &\!\!\! 1 &\!\!\! 0 &  1 &\!\!\! 1 &\!\!\! 0 \\
  1 &\!\!\! 1 &\!\!\! 0 &  1 &\!\!\! 1 &\!\!\! 0
  \end{array} \right] } &
    9 &
   28
  \\[8pt]
     4 &
  {\tiny \left[ \begin{array}{ccc|ccc}
  0 &\!\!\! 0 &\!\!\! 0 &  0 &\!\!\! 1 &\!\!\! 3 \\
  0 &\!\!\! 1 &\!\!\! 1 &  1 &\!\!\! 1 &\!\!\! 0 \\
  3 &\!\!\! 1 &\!\!\! 0 &  0 &\!\!\! 0 &\!\!\! 0
  \end{array} \right] } &
   72 &
   29
  &
     2 &
  {\tiny \left[ \begin{array}{ccc|ccc}
  0 &\!\!\! 0 &\!\!\! 0 &  0 &\!\!\! 1 &\!\!\! 3 \\
  0 &\!\!\! 2 &\!\!\! 0 &  1 &\!\!\! 0 &\!\!\! 1 \\
  3 &\!\!\! 1 &\!\!\! 0 &  0 &\!\!\! 0 &\!\!\! 0
  \end{array} \right] } &
  144 &
   30
  \\[8pt]
     6 &
  {\tiny \left[ \begin{array}{ccc|ccc}
  0 &\!\!\! 0 &\!\!\! 0 &  0 &\!\!\! 1 &\!\!\! 3 \\
  0 &\!\!\! 2 &\!\!\! 1 &  1 &\!\!\! 0 &\!\!\! 0 \\
  2 &\!\!\! 1 &\!\!\! 0 &  1 &\!\!\! 0 &\!\!\! 0
  \end{array} \right] } &
  144 &
   31
  &
   -12 &
  {\tiny \left[ \begin{array}{ccc|ccc}
  0 &\!\!\! 0 &\!\!\! 0 &  0 &\!\!\! 1 &\!\!\! 3 \\
  0 &\!\!\! 2 &\!\!\! 1 &  1 &\!\!\! 0 &\!\!\! 0 \\
  3 &\!\!\! 0 &\!\!\! 0 &  0 &\!\!\! 1 &\!\!\! 0
  \end{array} \right] } &
  144 &
   32
  \\[8pt]
     2 &
  {\tiny \left[ \begin{array}{ccc|ccc}
  0 &\!\!\! 0 &\!\!\! 0 &  0 &\!\!\! 1 &\!\!\! 3 \\
  0 &\!\!\! 2 &\!\!\! 0 &  1 &\!\!\! 1 &\!\!\! 0 \\
  3 &\!\!\! 0 &\!\!\! 1 &  0 &\!\!\! 0 &\!\!\! 0
  \end{array} \right] } &
  144 &
   33
  &
     2 &
  {\tiny \left[ \begin{array}{ccc|ccc}
  0 &\!\!\! 0 &\!\!\! 0 &  0 &\!\!\! 1 &\!\!\! 3 \\
  0 &\!\!\! 3 &\!\!\! 0 &  1 &\!\!\! 0 &\!\!\! 0 \\
  2 &\!\!\! 0 &\!\!\! 1 &  1 &\!\!\! 0 &\!\!\! 0
  \end{array} \right] } &
  144 &
   34
  \\[8pt]
    -4 &
  {\tiny \left[ \begin{array}{ccc|ccc}
  0 &\!\!\! 0 &\!\!\! 0 &  0 &\!\!\! 1 &\!\!\! 3 \\
  0 &\!\!\! 3 &\!\!\! 0 &  1 &\!\!\! 0 &\!\!\! 0 \\
  3 &\!\!\! 0 &\!\!\! 0 &  0 &\!\!\! 0 &\!\!\! 1
  \end{array} \right] } &
   72 &
   35
  &
    -2 &
  {\tiny \left[ \begin{array}{ccc|ccc}
  0 &\!\!\! 0 &\!\!\! 0 &  0 &\!\!\! 1 &\!\!\! 3 \\
  0 &\!\!\! 1 &\!\!\! 1 &  2 &\!\!\! 0 &\!\!\! 0 \\
  2 &\!\!\! 2 &\!\!\! 0 &  0 &\!\!\! 0 &\!\!\! 0
  \end{array} \right] } &
  144 &
   36
  \\[8pt]
    -2 &
  {\tiny \left[ \begin{array}{ccc|ccc}
  0 &\!\!\! 0 &\!\!\! 0 &  0 &\!\!\! 1 &\!\!\! 3 \\
  0 &\!\!\! 2 &\!\!\! 0 &  2 &\!\!\! 0 &\!\!\! 0 \\
  2 &\!\!\! 1 &\!\!\! 1 &  0 &\!\!\! 0 &\!\!\! 0
  \end{array} \right] } &
  144 &
   37
  &
     4 &
  {\tiny \left[ \begin{array}{ccc|ccc}
  0 &\!\!\! 0 &\!\!\! 0 &  0 &\!\!\! 1 &\!\!\! 3 \\
  1 &\!\!\! 1 &\!\!\! 0 &  0 &\!\!\! 1 &\!\!\! 1 \\
  3 &\!\!\! 1 &\!\!\! 0 &  0 &\!\!\! 0 &\!\!\! 0
  \end{array} \right] } &
   36 &
   38
  \\[8pt]
   -16 &
  {\tiny \left[ \begin{array}{ccc|ccc}
  0 &\!\!\! 0 &\!\!\! 0 &  0 &\!\!\! 1 &\!\!\! 3 \\
  1 &\!\!\! 1 &\!\!\! 1 &  0 &\!\!\! 1 &\!\!\! 0 \\
  2 &\!\!\! 1 &\!\!\! 0 &  1 &\!\!\! 0 &\!\!\! 0
  \end{array} \right] } &
  144 &
   39
  &
     4 &
  {\tiny \left[ \begin{array}{ccc|ccc}
  0 &\!\!\! 0 &\!\!\! 0 &  0 &\!\!\! 1 &\!\!\! 3 \\
  1 &\!\!\! 1 &\!\!\! 1 &  0 &\!\!\! 1 &\!\!\! 0 \\
  3 &\!\!\! 0 &\!\!\! 0 &  0 &\!\!\! 1 &\!\!\! 0
  \end{array} \right] } &
   72 &
   40
  \end{array}
  \]
  \smallskip
  \caption{The $3 \times 3 \times 2$ hyperdeterminant, orbits 1 to 40}
  \label{longtable1}
  \end{table}


  \begin{table}
  \[
  \begin{array}{rrrr|rrrr}
  \text{coef} & \text{representative}  & \text{size} & \#
  &
  \text{coef} & \text{representative}  & \text{size} & \#
  \\
     2 &
  {\tiny \left[ \begin{array}{ccc|ccc}
  0 &\!\!\! 0 &\!\!\! 0 &  0 &\!\!\! 1 &\!\!\! 3 \\
  1 &\!\!\! 1 &\!\!\! 0 &  0 &\!\!\! 2 &\!\!\! 0 \\
  3 &\!\!\! 0 &\!\!\! 1 &  0 &\!\!\! 0 &\!\!\! 0
  \end{array} \right] } &
  144 &
   41
  &
    -8 &
  {\tiny \left[ \begin{array}{ccc|ccc}
  0 &\!\!\! 0 &\!\!\! 0 &  0 &\!\!\! 1 &\!\!\! 3 \\
  1 &\!\!\! 2 &\!\!\! 0 &  0 &\!\!\! 0 &\!\!\! 1 \\
  2 &\!\!\! 1 &\!\!\! 0 &  1 &\!\!\! 0 &\!\!\! 0
  \end{array} \right] } &
   72 &
   42
  \\[8pt]
     6 &
  {\tiny \left[ \begin{array}{ccc|ccc}
  0 &\!\!\! 0 &\!\!\! 0 &  0 &\!\!\! 1 &\!\!\! 3 \\
  1 &\!\!\! 2 &\!\!\! 1 &  0 &\!\!\! 0 &\!\!\! 0 \\
  1 &\!\!\! 1 &\!\!\! 0 &  2 &\!\!\! 0 &\!\!\! 0
  \end{array} \right] } &
  144 &
   43
  &
     6 &
  {\tiny \left[ \begin{array}{ccc|ccc}
  0 &\!\!\! 0 &\!\!\! 0 &  0 &\!\!\! 1 &\!\!\! 3 \\
  1 &\!\!\! 2 &\!\!\! 1 &  0 &\!\!\! 0 &\!\!\! 0 \\
  2 &\!\!\! 0 &\!\!\! 0 &  1 &\!\!\! 1 &\!\!\! 0
  \end{array} \right] } &
  144 &
   44
  \\[8pt]
    -8 &
  {\tiny \left[ \begin{array}{ccc|ccc}
  0 &\!\!\! 0 &\!\!\! 0 &  0 &\!\!\! 1 &\!\!\! 3 \\
  1 &\!\!\! 2 &\!\!\! 0 &  0 &\!\!\! 1 &\!\!\! 0 \\
  2 &\!\!\! 0 &\!\!\! 1 &  1 &\!\!\! 0 &\!\!\! 0
  \end{array} \right] } &
  144 &
   45
  &
    -4 &
  {\tiny \left[ \begin{array}{ccc|ccc}
  0 &\!\!\! 0 &\!\!\! 0 &  0 &\!\!\! 1 &\!\!\! 3 \\
  1 &\!\!\! 0 &\!\!\! 1 &  1 &\!\!\! 1 &\!\!\! 0 \\
  2 &\!\!\! 2 &\!\!\! 0 &  0 &\!\!\! 0 &\!\!\! 0
  \end{array} \right] } &
  144 &
   46
  \\[8pt]
    -4 &
  {\tiny \left[ \begin{array}{ccc|ccc}
  0 &\!\!\! 0 &\!\!\! 0 &  0 &\!\!\! 1 &\!\!\! 3 \\
  1 &\!\!\! 1 &\!\!\! 0 &  1 &\!\!\! 0 &\!\!\! 1 \\
  2 &\!\!\! 2 &\!\!\! 0 &  0 &\!\!\! 0 &\!\!\! 0
  \end{array} \right] } &
  144 &
   47
  &
    -8 &
  {\tiny \left[ \begin{array}{ccc|ccc}
  0 &\!\!\! 0 &\!\!\! 0 &  0 &\!\!\! 1 &\!\!\! 3 \\
  1 &\!\!\! 1 &\!\!\! 1 &  1 &\!\!\! 0 &\!\!\! 0 \\
  1 &\!\!\! 2 &\!\!\! 0 &  1 &\!\!\! 0 &\!\!\! 0
  \end{array} \right] } &
  144 &
   48
  \\[8pt]
    20 &
  {\tiny \left[ \begin{array}{ccc|ccc}
  0 &\!\!\! 0 &\!\!\! 0 &  0 &\!\!\! 1 &\!\!\! 3 \\
  1 &\!\!\! 1 &\!\!\! 1 &  1 &\!\!\! 0 &\!\!\! 0 \\
  2 &\!\!\! 1 &\!\!\! 0 &  0 &\!\!\! 1 &\!\!\! 0
  \end{array} \right] } &
  144 &
   49
  &
    -8 &
  {\tiny \left[ \begin{array}{ccc|ccc}
  0 &\!\!\! 0 &\!\!\! 0 &  0 &\!\!\! 1 &\!\!\! 3 \\
  1 &\!\!\! 1 &\!\!\! 0 &  1 &\!\!\! 1 &\!\!\! 0 \\
  2 &\!\!\! 1 &\!\!\! 1 &  0 &\!\!\! 0 &\!\!\! 0
  \end{array} \right] } &
  144 &
   50
  \\[8pt]
    10 &
  {\tiny \left[ \begin{array}{ccc|ccc}
  0 &\!\!\! 0 &\!\!\! 0 &  0 &\!\!\! 1 &\!\!\! 3 \\
  1 &\!\!\! 2 &\!\!\! 0 &  1 &\!\!\! 0 &\!\!\! 0 \\
  2 &\!\!\! 0 &\!\!\! 1 &  0 &\!\!\! 1 &\!\!\! 0
  \end{array} \right] } &
  144 &
   51
  &
    10 &
  {\tiny \left[ \begin{array}{ccc|ccc}
  0 &\!\!\! 0 &\!\!\! 0 &  0 &\!\!\! 1 &\!\!\! 3 \\
  1 &\!\!\! 2 &\!\!\! 0 &  1 &\!\!\! 0 &\!\!\! 0 \\
  2 &\!\!\! 1 &\!\!\! 0 &  0 &\!\!\! 0 &\!\!\! 1
  \end{array} \right] } &
   72 &
   52
  \\[8pt]
    -2 &
  {\tiny \left[ \begin{array}{ccc|ccc}
  0 &\!\!\! 0 &\!\!\! 0 &  0 &\!\!\! 1 &\!\!\! 3 \\
  2 &\!\!\! 0 &\!\!\! 0 &  0 &\!\!\! 1 &\!\!\! 1 \\
  2 &\!\!\! 2 &\!\!\! 0 &  0 &\!\!\! 0 &\!\!\! 0
  \end{array} \right] } &
   72 &
   53
  &
    -4 &
  {\tiny \left[ \begin{array}{ccc|ccc}
  0 &\!\!\! 0 &\!\!\! 0 &  0 &\!\!\! 1 &\!\!\! 3 \\
  2 &\!\!\! 0 &\!\!\! 1 &  0 &\!\!\! 1 &\!\!\! 0 \\
  2 &\!\!\! 1 &\!\!\! 0 &  0 &\!\!\! 1 &\!\!\! 0
  \end{array} \right] } &
  144 &
   54
  \\[8pt]
    -2 &
  {\tiny \left[ \begin{array}{ccc|ccc}
  0 &\!\!\! 0 &\!\!\! 0 &  0 &\!\!\! 1 &\!\!\! 3 \\
  2 &\!\!\! 0 &\!\!\! 0 &  0 &\!\!\! 2 &\!\!\! 0 \\
  2 &\!\!\! 1 &\!\!\! 1 &  0 &\!\!\! 0 &\!\!\! 0
  \end{array} \right] } &
  144 &
   55
  &
    -6 &
  {\tiny \left[ \begin{array}{ccc|ccc}
  0 &\!\!\! 0 &\!\!\! 1 &  0 &\!\!\! 1 &\!\!\! 2 \\
  0 &\!\!\! 1 &\!\!\! 1 &  1 &\!\!\! 1 &\!\!\! 0 \\
  2 &\!\!\! 1 &\!\!\! 0 &  1 &\!\!\! 0 &\!\!\! 0
  \end{array} \right] } &
   72 &
   56
  \\[8pt]
    -2 &
  {\tiny \left[ \begin{array}{ccc|ccc}
  0 &\!\!\! 0 &\!\!\! 1 &  0 &\!\!\! 1 &\!\!\! 2 \\
  0 &\!\!\! 1 &\!\!\! 1 &  1 &\!\!\! 1 &\!\!\! 0 \\
  3 &\!\!\! 0 &\!\!\! 0 &  0 &\!\!\! 1 &\!\!\! 0
  \end{array} \right] } &
  144 &
   57
  &
   -10 &
  {\tiny \left[ \begin{array}{ccc|ccc}
  0 &\!\!\! 0 &\!\!\! 1 &  0 &\!\!\! 1 &\!\!\! 2 \\
  0 &\!\!\! 2 &\!\!\! 0 &  1 &\!\!\! 0 &\!\!\! 1 \\
  2 &\!\!\! 1 &\!\!\! 0 &  1 &\!\!\! 0 &\!\!\! 0
  \end{array} \right] } &
  144 &
   58
  \\[8pt]
    20 &
  {\tiny \left[ \begin{array}{ccc|ccc}
  0 &\!\!\! 0 &\!\!\! 1 &  0 &\!\!\! 1 &\!\!\! 2 \\
  0 &\!\!\! 2 &\!\!\! 0 &  1 &\!\!\! 0 &\!\!\! 1 \\
  3 &\!\!\! 0 &\!\!\! 0 &  0 &\!\!\! 1 &\!\!\! 0
  \end{array} \right] } &
   72 &
   59
  &
   -10 &
  {\tiny \left[ \begin{array}{ccc|ccc}
  0 &\!\!\! 0 &\!\!\! 1 &  0 &\!\!\! 1 &\!\!\! 2 \\
  0 &\!\!\! 2 &\!\!\! 1 &  1 &\!\!\! 0 &\!\!\! 0 \\
  1 &\!\!\! 1 &\!\!\! 0 &  2 &\!\!\! 0 &\!\!\! 0
  \end{array} \right] } &
   72 &
   60
  \\[8pt]
    18 &
  {\tiny \left[ \begin{array}{ccc|ccc}
  0 &\!\!\! 0 &\!\!\! 1 &  0 &\!\!\! 1 &\!\!\! 2 \\
  0 &\!\!\! 2 &\!\!\! 1 &  1 &\!\!\! 0 &\!\!\! 0 \\
  2 &\!\!\! 0 &\!\!\! 0 &  1 &\!\!\! 1 &\!\!\! 0
  \end{array} \right] } &
  144 &
   61
  &
     4 &
  {\tiny \left[ \begin{array}{ccc|ccc}
  0 &\!\!\! 0 &\!\!\! 1 &  0 &\!\!\! 1 &\!\!\! 2 \\
  0 &\!\!\! 2 &\!\!\! 0 &  1 &\!\!\! 1 &\!\!\! 0 \\
  2 &\!\!\! 0 &\!\!\! 1 &  1 &\!\!\! 0 &\!\!\! 0
  \end{array} \right] } &
  144 &
   62
  \\[8pt]
     6 &
  {\tiny \left[ \begin{array}{ccc|ccc}
  0 &\!\!\! 0 &\!\!\! 1 &  0 &\!\!\! 1 &\!\!\! 2 \\
  0 &\!\!\! 2 &\!\!\! 0 &  1 &\!\!\! 1 &\!\!\! 0 \\
  3 &\!\!\! 0 &\!\!\! 0 &  0 &\!\!\! 0 &\!\!\! 1
  \end{array} \right] } &
  144 &
   63
  &
    -8 &
  {\tiny \left[ \begin{array}{ccc|ccc}
  0 &\!\!\! 0 &\!\!\! 1 &  0 &\!\!\! 1 &\!\!\! 2 \\
  0 &\!\!\! 3 &\!\!\! 0 &  1 &\!\!\! 0 &\!\!\! 0 \\
  1 &\!\!\! 0 &\!\!\! 1 &  2 &\!\!\! 0 &\!\!\! 0
  \end{array} \right] } &
   72 &
   64
  \\[8pt]
    -6 &
  {\tiny \left[ \begin{array}{ccc|ccc}
  0 &\!\!\! 0 &\!\!\! 1 &  0 &\!\!\! 1 &\!\!\! 2 \\
  0 &\!\!\! 0 &\!\!\! 1 &  2 &\!\!\! 1 &\!\!\! 0 \\
  2 &\!\!\! 2 &\!\!\! 0 &  0 &\!\!\! 0 &\!\!\! 0
  \end{array} \right] } &
  144 &
   65
  &
     8 &
  {\tiny \left[ \begin{array}{ccc|ccc}
  0 &\!\!\! 0 &\!\!\! 1 &  0 &\!\!\! 1 &\!\!\! 2 \\
  0 &\!\!\! 1 &\!\!\! 0 &  2 &\!\!\! 0 &\!\!\! 1 \\
  2 &\!\!\! 2 &\!\!\! 0 &  0 &\!\!\! 0 &\!\!\! 0
  \end{array} \right] } &
  144 &
   66
  \\[8pt]
     2 &
  {\tiny \left[ \begin{array}{ccc|ccc}
  0 &\!\!\! 0 &\!\!\! 1 &  0 &\!\!\! 1 &\!\!\! 2 \\
  0 &\!\!\! 1 &\!\!\! 1 &  2 &\!\!\! 0 &\!\!\! 0 \\
  1 &\!\!\! 2 &\!\!\! 0 &  1 &\!\!\! 0 &\!\!\! 0
  \end{array} \right] } &
  144 &
   67
  &
     2 &
  {\tiny \left[ \begin{array}{ccc|ccc}
  0 &\!\!\! 0 &\!\!\! 1 &  0 &\!\!\! 1 &\!\!\! 2 \\
  0 &\!\!\! 1 &\!\!\! 1 &  2 &\!\!\! 0 &\!\!\! 0 \\
  2 &\!\!\! 1 &\!\!\! 0 &  0 &\!\!\! 1 &\!\!\! 0
  \end{array} \right] } &
  144 &
   68
  \\[8pt]
    16 &
  {\tiny \left[ \begin{array}{ccc|ccc}
  0 &\!\!\! 0 &\!\!\! 1 &  0 &\!\!\! 1 &\!\!\! 2 \\
  0 &\!\!\! 1 &\!\!\! 0 &  2 &\!\!\! 1 &\!\!\! 0 \\
  2 &\!\!\! 1 &\!\!\! 1 &  0 &\!\!\! 0 &\!\!\! 0
  \end{array} \right] } &
  144 &
   69
  &
    16 &
  {\tiny \left[ \begin{array}{ccc|ccc}
  0 &\!\!\! 0 &\!\!\! 1 &  0 &\!\!\! 1 &\!\!\! 2 \\
  0 &\!\!\! 2 &\!\!\! 0 &  2 &\!\!\! 0 &\!\!\! 0 \\
  1 &\!\!\! 1 &\!\!\! 1 &  1 &\!\!\! 0 &\!\!\! 0
  \end{array} \right] } &
  144 &
   70
  \\[8pt]
   -20 &
  {\tiny \left[ \begin{array}{ccc|ccc}
  0 &\!\!\! 0 &\!\!\! 1 &  0 &\!\!\! 1 &\!\!\! 2 \\
  0 &\!\!\! 2 &\!\!\! 0 &  2 &\!\!\! 0 &\!\!\! 0 \\
  2 &\!\!\! 0 &\!\!\! 1 &  0 &\!\!\! 1 &\!\!\! 0
  \end{array} \right] } &
  144 &
   71
  &
    -6 &
  {\tiny \left[ \begin{array}{ccc|ccc}
  0 &\!\!\! 0 &\!\!\! 1 &  0 &\!\!\! 1 &\!\!\! 2 \\
  0 &\!\!\! 2 &\!\!\! 0 &  2 &\!\!\! 0 &\!\!\! 0 \\
  2 &\!\!\! 1 &\!\!\! 0 &  0 &\!\!\! 0 &\!\!\! 1
  \end{array} \right] } &
   72 &
   72
  \\[8pt]
    -8 &
  {\tiny \left[ \begin{array}{ccc|ccc}
  0 &\!\!\! 0 &\!\!\! 1 &  0 &\!\!\! 1 &\!\!\! 2 \\
  0 &\!\!\! 1 &\!\!\! 0 &  3 &\!\!\! 0 &\!\!\! 0 \\
  1 &\!\!\! 2 &\!\!\! 1 &  0 &\!\!\! 0 &\!\!\! 0
  \end{array} \right] } &
  144 &
   73
  &
    36 &
  {\tiny \left[ \begin{array}{ccc|ccc}
  0 &\!\!\! 0 &\!\!\! 1 &  0 &\!\!\! 1 &\!\!\! 2 \\
  1 &\!\!\! 1 &\!\!\! 0 &  0 &\!\!\! 1 &\!\!\! 1 \\
  2 &\!\!\! 1 &\!\!\! 0 &  1 &\!\!\! 0 &\!\!\! 0
  \end{array} \right] } &
   36 &
   74
  \\[8pt]
    22 &
  {\tiny \left[ \begin{array}{ccc|ccc}
  0 &\!\!\! 0 &\!\!\! 1 &  0 &\!\!\! 1 &\!\!\! 2 \\
  1 &\!\!\! 1 &\!\!\! 1 &  0 &\!\!\! 1 &\!\!\! 0 \\
  1 &\!\!\! 1 &\!\!\! 0 &  2 &\!\!\! 0 &\!\!\! 0
  \end{array} \right] } &
  144 &
   75
  &
    -6 &
  {\tiny \left[ \begin{array}{ccc|ccc}
  0 &\!\!\! 0 &\!\!\! 1 &  0 &\!\!\! 1 &\!\!\! 2 \\
  1 &\!\!\! 1 &\!\!\! 1 &  0 &\!\!\! 1 &\!\!\! 0 \\
  2 &\!\!\! 0 &\!\!\! 0 &  1 &\!\!\! 1 &\!\!\! 0
  \end{array} \right] } &
   72 &
   76
  \\[8pt]
     4 &
  {\tiny \left[ \begin{array}{ccc|ccc}
  0 &\!\!\! 0 &\!\!\! 1 &  0 &\!\!\! 1 &\!\!\! 2 \\
  1 &\!\!\! 1 &\!\!\! 0 &  0 &\!\!\! 2 &\!\!\! 0 \\
  2 &\!\!\! 0 &\!\!\! 1 &  1 &\!\!\! 0 &\!\!\! 0
  \end{array} \right] } &
   72 &
   77
  &
     4 &
  {\tiny \left[ \begin{array}{ccc|ccc}
  0 &\!\!\! 0 &\!\!\! 1 &  0 &\!\!\! 1 &\!\!\! 2 \\
  1 &\!\!\! 2 &\!\!\! 0 &  0 &\!\!\! 0 &\!\!\! 1 \\
  1 &\!\!\! 1 &\!\!\! 0 &  2 &\!\!\! 0 &\!\!\! 0
  \end{array} \right] } &
   72 &
   78
  \\[8pt]
   -10 &
  {\tiny \left[ \begin{array}{ccc|ccc}
  0 &\!\!\! 0 &\!\!\! 1 &  0 &\!\!\! 1 &\!\!\! 2 \\
  1 &\!\!\! 2 &\!\!\! 1 &  0 &\!\!\! 0 &\!\!\! 0 \\
  1 &\!\!\! 0 &\!\!\! 0 &  2 &\!\!\! 1 &\!\!\! 0
  \end{array} \right] } &
   72 &
   79
  &
    16 &
  {\tiny \left[ \begin{array}{ccc|ccc}
  0 &\!\!\! 0 &\!\!\! 1 &  0 &\!\!\! 1 &\!\!\! 2 \\
  1 &\!\!\! 0 &\!\!\! 0 &  1 &\!\!\! 1 &\!\!\! 1 \\
  2 &\!\!\! 2 &\!\!\! 0 &  0 &\!\!\! 0 &\!\!\! 0
  \end{array} \right] } &
  144 &
   80
  \end{array}
  \]
  \smallskip
  \caption{The $3 \times 3 \times 2$ hyperdeterminant, orbits 41 to 80}
  \label{longtable2}
  \end{table}


  \begin{table}
  \[
  \begin{array}{rrrr|rrrr}
  \text{coef} & \text{representative}  & \text{size} & \#
  &
  \text{coef} & \text{representative}  & \text{size} & \#
  \\
    18 &
  {\tiny \left[ \begin{array}{ccc|ccc}
  0 &\!\!\! 0 &\!\!\! 1 &  0 &\!\!\! 1 &\!\!\! 2 \\
  1 &\!\!\! 0 &\!\!\! 1 &  1 &\!\!\! 1 &\!\!\! 0 \\
  1 &\!\!\! 2 &\!\!\! 0 &  1 &\!\!\! 0 &\!\!\! 0
  \end{array} \right] } &
  144 &
   81
  &
   -10 &
  {\tiny \left[ \begin{array}{ccc|ccc}
  0 &\!\!\! 0 &\!\!\! 1 &  0 &\!\!\! 1 &\!\!\! 2 \\
  1 &\!\!\! 0 &\!\!\! 1 &  1 &\!\!\! 1 &\!\!\! 0 \\
  2 &\!\!\! 1 &\!\!\! 0 &  0 &\!\!\! 1 &\!\!\! 0
  \end{array} \right] } &
  144 &
   82
  \\[8pt]
     2 &
  {\tiny \left[ \begin{array}{ccc|ccc}
  0 &\!\!\! 0 &\!\!\! 1 &  0 &\!\!\! 1 &\!\!\! 2 \\
  1 &\!\!\! 0 &\!\!\! 0 &  1 &\!\!\! 2 &\!\!\! 0 \\
  2 &\!\!\! 1 &\!\!\! 1 &  0 &\!\!\! 0 &\!\!\! 0
  \end{array} \right] } &
  144 &
   83
  &
     4 &
  {\tiny \left[ \begin{array}{ccc|ccc}
  0 &\!\!\! 0 &\!\!\! 1 &  0 &\!\!\! 1 &\!\!\! 2 \\
  1 &\!\!\! 1 &\!\!\! 0 &  1 &\!\!\! 0 &\!\!\! 1 \\
  1 &\!\!\! 2 &\!\!\! 0 &  1 &\!\!\! 0 &\!\!\! 0
  \end{array} \right] } &
  144 &
   84
  \\[8pt]
   -24 &
  {\tiny \left[ \begin{array}{ccc|ccc}
  0 &\!\!\! 0 &\!\!\! 1 &  0 &\!\!\! 1 &\!\!\! 2 \\
  1 &\!\!\! 1 &\!\!\! 0 &  1 &\!\!\! 0 &\!\!\! 1 \\
  2 &\!\!\! 1 &\!\!\! 0 &  0 &\!\!\! 1 &\!\!\! 0
  \end{array} \right] } &
   72 &
   85
  &
   -34 &
  {\tiny \left[ \begin{array}{ccc|ccc}
  0 &\!\!\! 0 &\!\!\! 1 &  0 &\!\!\! 1 &\!\!\! 2 \\
  1 &\!\!\! 1 &\!\!\! 1 &  1 &\!\!\! 0 &\!\!\! 0 \\
  1 &\!\!\! 1 &\!\!\! 0 &  1 &\!\!\! 1 &\!\!\! 0
  \end{array} \right] } &
  144 &
   86
  \\[8pt]
     2 &
  {\tiny \left[ \begin{array}{ccc|ccc}
  0 &\!\!\! 0 &\!\!\! 1 &  0 &\!\!\! 1 &\!\!\! 2 \\
  1 &\!\!\! 1 &\!\!\! 1 &  1 &\!\!\! 0 &\!\!\! 0 \\
  2 &\!\!\! 0 &\!\!\! 0 &  0 &\!\!\! 2 &\!\!\! 0
  \end{array} \right] } &
  144 &
   87
  &
    18 &
  {\tiny \left[ \begin{array}{ccc|ccc}
  0 &\!\!\! 0 &\!\!\! 1 &  0 &\!\!\! 1 &\!\!\! 2 \\
  1 &\!\!\! 1 &\!\!\! 0 &  1 &\!\!\! 1 &\!\!\! 0 \\
  2 &\!\!\! 0 &\!\!\! 1 &  0 &\!\!\! 1 &\!\!\! 0
  \end{array} \right] } &
  144 &
   88
  \\[8pt]
   -26 &
  {\tiny \left[ \begin{array}{ccc|ccc}
  0 &\!\!\! 0 &\!\!\! 1 &  0 &\!\!\! 1 &\!\!\! 2 \\
  1 &\!\!\! 2 &\!\!\! 0 &  1 &\!\!\! 0 &\!\!\! 0 \\
  2 &\!\!\! 0 &\!\!\! 0 &  0 &\!\!\! 1 &\!\!\! 1
  \end{array} \right] } &
   72 &
   89
  &
    -6 &
  {\tiny \left[ \begin{array}{ccc|ccc}
  0 &\!\!\! 0 &\!\!\! 1 &  0 &\!\!\! 1 &\!\!\! 2 \\
  2 &\!\!\! 0 &\!\!\! 1 &  0 &\!\!\! 1 &\!\!\! 0 \\
  2 &\!\!\! 0 &\!\!\! 0 &  0 &\!\!\! 2 &\!\!\! 0
  \end{array} \right] } &
  144 &
   90
  \\[8pt]
    -2 &
  {\tiny \left[ \begin{array}{ccc|ccc}
  0 &\!\!\! 0 &\!\!\! 2 &  0 &\!\!\! 1 &\!\!\! 1 \\
  0 &\!\!\! 1 &\!\!\! 0 &  1 &\!\!\! 1 &\!\!\! 1 \\
  3 &\!\!\! 0 &\!\!\! 0 &  0 &\!\!\! 1 &\!\!\! 0
  \end{array} \right] } &
   72 &
   91
  &
   -38 &
  {\tiny \left[ \begin{array}{ccc|ccc}
  0 &\!\!\! 0 &\!\!\! 2 &  0 &\!\!\! 1 &\!\!\! 1 \\
  0 &\!\!\! 2 &\!\!\! 0 &  1 &\!\!\! 0 &\!\!\! 1 \\
  2 &\!\!\! 0 &\!\!\! 0 &  1 &\!\!\! 1 &\!\!\! 0
  \end{array} \right] } &
   36 &
   92
  \\[8pt]
   -24 &
  {\tiny \left[ \begin{array}{ccc|ccc}
  0 &\!\!\! 0 &\!\!\! 2 &  0 &\!\!\! 1 &\!\!\! 1 \\
  0 &\!\!\! 2 &\!\!\! 0 &  1 &\!\!\! 1 &\!\!\! 0 \\
  2 &\!\!\! 0 &\!\!\! 0 &  1 &\!\!\! 0 &\!\!\! 1
  \end{array} \right] } &
   24 &
   93
  &
    -6 &
  {\tiny \left[ \begin{array}{ccc|ccc}
  0 &\!\!\! 0 &\!\!\! 2 &  0 &\!\!\! 1 &\!\!\! 1 \\
  0 &\!\!\! 0 &\!\!\! 0 &  2 &\!\!\! 1 &\!\!\! 1 \\
  2 &\!\!\! 2 &\!\!\! 0 &  0 &\!\!\! 0 &\!\!\! 0
  \end{array} \right] } &
  144 &
   94
  \\[8pt]
   -12 &
  {\tiny \left[ \begin{array}{ccc|ccc}
  0 &\!\!\! 0 &\!\!\! 2 &  0 &\!\!\! 1 &\!\!\! 1 \\
  0 &\!\!\! 1 &\!\!\! 0 &  2 &\!\!\! 0 &\!\!\! 1 \\
  1 &\!\!\! 2 &\!\!\! 0 &  1 &\!\!\! 0 &\!\!\! 0
  \end{array} \right] } &
  144 &
   95
  &
     2 &
  {\tiny \left[ \begin{array}{ccc|ccc}
  0 &\!\!\! 0 &\!\!\! 2 &  0 &\!\!\! 1 &\!\!\! 1 \\
  0 &\!\!\! 1 &\!\!\! 0 &  2 &\!\!\! 0 &\!\!\! 1 \\
  2 &\!\!\! 1 &\!\!\! 0 &  0 &\!\!\! 1 &\!\!\! 0
  \end{array} \right] } &
  144 &
   96
  \\[8pt]
   -12 &
  {\tiny \left[ \begin{array}{ccc|ccc}
  0 &\!\!\! 0 &\!\!\! 2 &  0 &\!\!\! 1 &\!\!\! 1 \\
  0 &\!\!\! 1 &\!\!\! 0 &  2 &\!\!\! 1 &\!\!\! 0 \\
  2 &\!\!\! 1 &\!\!\! 0 &  0 &\!\!\! 0 &\!\!\! 1
  \end{array} \right] } &
  144 &
   97
  &
    36 &
  {\tiny \left[ \begin{array}{ccc|ccc}
  0 &\!\!\! 0 &\!\!\! 2 &  0 &\!\!\! 1 &\!\!\! 1 \\
  0 &\!\!\! 2 &\!\!\! 0 &  2 &\!\!\! 0 &\!\!\! 0 \\
  2 &\!\!\! 0 &\!\!\! 0 &  0 &\!\!\! 1 &\!\!\! 1
  \end{array} \right] } &
   72 &
   98
  \\[8pt]
     6 &
  {\tiny \left[ \begin{array}{ccc|ccc}
  0 &\!\!\! 0 &\!\!\! 2 &  0 &\!\!\! 1 &\!\!\! 1 \\
  0 &\!\!\! 1 &\!\!\! 0 &  3 &\!\!\! 0 &\!\!\! 0 \\
  1 &\!\!\! 2 &\!\!\! 0 &  0 &\!\!\! 0 &\!\!\! 1
  \end{array} \right] } &
   72 &
   99
  &
   -62 &
  {\tiny \left[ \begin{array}{ccc|ccc}
  0 &\!\!\! 0 &\!\!\! 2 &  0 &\!\!\! 1 &\!\!\! 1 \\
  1 &\!\!\! 1 &\!\!\! 0 &  0 &\!\!\! 1 &\!\!\! 1 \\
  1 &\!\!\! 1 &\!\!\! 0 &  2 &\!\!\! 0 &\!\!\! 0
  \end{array} \right] } &
   36 &
  100
  \\[8pt]
   -10 &
  {\tiny \left[ \begin{array}{ccc|ccc}
  0 &\!\!\! 0 &\!\!\! 2 &  0 &\!\!\! 1 &\!\!\! 1 \\
  1 &\!\!\! 0 &\!\!\! 0 &  1 &\!\!\! 1 &\!\!\! 1 \\
  1 &\!\!\! 2 &\!\!\! 0 &  1 &\!\!\! 0 &\!\!\! 0
  \end{array} \right] } &
  144 &
  101
  &
   -10 &
  {\tiny \left[ \begin{array}{ccc|ccc}
  0 &\!\!\! 0 &\!\!\! 2 &  0 &\!\!\! 1 &\!\!\! 1 \\
  1 &\!\!\! 0 &\!\!\! 0 &  1 &\!\!\! 1 &\!\!\! 1 \\
  2 &\!\!\! 1 &\!\!\! 0 &  0 &\!\!\! 1 &\!\!\! 0
  \end{array} \right] } &
   72 &
  102
  \\[8pt]
    50 &
  {\tiny \left[ \begin{array}{ccc|ccc}
  0 &\!\!\! 0 &\!\!\! 2 &  0 &\!\!\! 1 &\!\!\! 1 \\
  1 &\!\!\! 1 &\!\!\! 0 &  1 &\!\!\! 0 &\!\!\! 1 \\
  1 &\!\!\! 1 &\!\!\! 0 &  1 &\!\!\! 1 &\!\!\! 0
  \end{array} \right] } &
   72 &
  103
  &
     2 &
  {\tiny \left[ \begin{array}{ccc|ccc}
  0 &\!\!\! 0 &\!\!\! 3 &  0 &\!\!\! 1 &\!\!\! 0 \\
  0 &\!\!\! 0 &\!\!\! 0 &  1 &\!\!\! 2 &\!\!\! 1 \\
  3 &\!\!\! 0 &\!\!\! 0 &  0 &\!\!\! 1 &\!\!\! 0
  \end{array} \right] } &
   36 &
  104
  \\[8pt]
    10 &
  {\tiny \left[ \begin{array}{ccc|ccc}
  0 &\!\!\! 0 &\!\!\! 3 &  0 &\!\!\! 1 &\!\!\! 0 \\
  0 &\!\!\! 0 &\!\!\! 0 &  2 &\!\!\! 1 &\!\!\! 1 \\
  1 &\!\!\! 2 &\!\!\! 0 &  1 &\!\!\! 0 &\!\!\! 0
  \end{array} \right] } &
  144 &
  105
  &
    -4 &
  {\tiny \left[ \begin{array}{ccc|ccc}
  0 &\!\!\! 0 &\!\!\! 3 &  0 &\!\!\! 1 &\!\!\! 0 \\
  0 &\!\!\! 0 &\!\!\! 0 &  2 &\!\!\! 1 &\!\!\! 1 \\
  2 &\!\!\! 1 &\!\!\! 0 &  0 &\!\!\! 1 &\!\!\! 0
  \end{array} \right] } &
  144 &
  106
  \\[8pt]
    12 &
  {\tiny \left[ \begin{array}{ccc|ccc}
  0 &\!\!\! 0 &\!\!\! 3 &  0 &\!\!\! 1 &\!\!\! 0 \\
  0 &\!\!\! 0 &\!\!\! 0 &  2 &\!\!\! 2 &\!\!\! 0 \\
  2 &\!\!\! 1 &\!\!\! 0 &  0 &\!\!\! 0 &\!\!\! 1
  \end{array} \right] } &
  144 &
  107
  &
   -22 &
  {\tiny \left[ \begin{array}{ccc|ccc}
  0 &\!\!\! 0 &\!\!\! 3 &  0 &\!\!\! 1 &\!\!\! 0 \\
  0 &\!\!\! 1 &\!\!\! 0 &  2 &\!\!\! 0 &\!\!\! 1 \\
  1 &\!\!\! 1 &\!\!\! 0 &  1 &\!\!\! 1 &\!\!\! 0
  \end{array} \right] } &
  144 &
  108
  \\[8pt]
    12 &
  {\tiny \left[ \begin{array}{ccc|ccc}
  0 &\!\!\! 0 &\!\!\! 3 &  0 &\!\!\! 1 &\!\!\! 0 \\
  0 &\!\!\! 1 &\!\!\! 0 &  2 &\!\!\! 0 &\!\!\! 1 \\
  2 &\!\!\! 0 &\!\!\! 0 &  0 &\!\!\! 2 &\!\!\! 0
  \end{array} \right] } &
   72 &
  109
  &
     6 &
  {\tiny \left[ \begin{array}{ccc|ccc}
  0 &\!\!\! 0 &\!\!\! 3 &  0 &\!\!\! 1 &\!\!\! 0 \\
  0 &\!\!\! 1 &\!\!\! 0 &  2 &\!\!\! 1 &\!\!\! 0 \\
  1 &\!\!\! 1 &\!\!\! 0 &  1 &\!\!\! 0 &\!\!\! 1
  \end{array} \right] } &
   72 &
  110
  \\[8pt]
   -18 &
  {\tiny \left[ \begin{array}{ccc|ccc}
  0 &\!\!\! 0 &\!\!\! 3 &  0 &\!\!\! 1 &\!\!\! 0 \\
  0 &\!\!\! 1 &\!\!\! 0 &  2 &\!\!\! 1 &\!\!\! 0 \\
  2 &\!\!\! 0 &\!\!\! 0 &  0 &\!\!\! 1 &\!\!\! 1
  \end{array} \right] } &
  144 &
  111
  &
    18 &
  {\tiny \left[ \begin{array}{ccc|ccc}
  0 &\!\!\! 0 &\!\!\! 3 &  0 &\!\!\! 1 &\!\!\! 0 \\
  0 &\!\!\! 1 &\!\!\! 0 &  3 &\!\!\! 0 &\!\!\! 0 \\
  1 &\!\!\! 1 &\!\!\! 0 &  0 &\!\!\! 1 &\!\!\! 1
  \end{array} \right] } &
   36 &
  112
  \\[8pt]
    26 &
  {\tiny \left[ \begin{array}{ccc|ccc}
  0 &\!\!\! 0 &\!\!\! 3 &  0 &\!\!\! 1 &\!\!\! 0 \\
  1 &\!\!\! 0 &\!\!\! 0 &  1 &\!\!\! 1 &\!\!\! 1 \\
  1 &\!\!\! 1 &\!\!\! 0 &  1 &\!\!\! 1 &\!\!\! 0
  \end{array} \right] } &
   72 &
  113
  &
     1 &
  {\tiny \left[ \begin{array}{ccc|ccc}
  0 &\!\!\! 0 &\!\!\! 0 &  0 &\!\!\! 2 &\!\!\! 2 \\
  0 &\!\!\! 0 &\!\!\! 2 &  2 &\!\!\! 0 &\!\!\! 0 \\
  2 &\!\!\! 2 &\!\!\! 0 &  0 &\!\!\! 0 &\!\!\! 0
  \end{array} \right] } &
   72 &
  114
  \\[8pt]
     4 &
  {\tiny \left[ \begin{array}{ccc|ccc}
  0 &\!\!\! 0 &\!\!\! 0 &  0 &\!\!\! 2 &\!\!\! 2 \\
  0 &\!\!\! 1 &\!\!\! 1 &  2 &\!\!\! 0 &\!\!\! 0 \\
  2 &\!\!\! 1 &\!\!\! 1 &  0 &\!\!\! 0 &\!\!\! 0
  \end{array} \right] } &
   72 &
  115
  &
     4 &
  {\tiny \left[ \begin{array}{ccc|ccc}
  0 &\!\!\! 0 &\!\!\! 0 &  0 &\!\!\! 2 &\!\!\! 2 \\
  1 &\!\!\! 0 &\!\!\! 1 &  1 &\!\!\! 0 &\!\!\! 1 \\
  2 &\!\!\! 2 &\!\!\! 0 &  0 &\!\!\! 0 &\!\!\! 0
  \end{array} \right] } &
   72 &
  116
  \\[8pt]
     4 &
  {\tiny \left[ \begin{array}{ccc|ccc}
  0 &\!\!\! 0 &\!\!\! 0 &  0 &\!\!\! 2 &\!\!\! 2 \\
  1 &\!\!\! 0 &\!\!\! 2 &  1 &\!\!\! 0 &\!\!\! 0 \\
  1 &\!\!\! 2 &\!\!\! 0 &  1 &\!\!\! 0 &\!\!\! 0
  \end{array} \right] } &
   72 &
  117
  &
   -10 &
  {\tiny \left[ \begin{array}{ccc|ccc}
  0 &\!\!\! 0 &\!\!\! 0 &  0 &\!\!\! 2 &\!\!\! 2 \\
  1 &\!\!\! 0 &\!\!\! 2 &  1 &\!\!\! 0 &\!\!\! 0 \\
  2 &\!\!\! 1 &\!\!\! 0 &  0 &\!\!\! 1 &\!\!\! 0
  \end{array} \right] } &
  144 &
  118
  \\[8pt]
     8 &
  {\tiny \left[ \begin{array}{ccc|ccc}
  0 &\!\!\! 0 &\!\!\! 0 &  0 &\!\!\! 2 &\!\!\! 2 \\
  1 &\!\!\! 0 &\!\!\! 1 &  1 &\!\!\! 1 &\!\!\! 0 \\
  2 &\!\!\! 1 &\!\!\! 1 &  0 &\!\!\! 0 &\!\!\! 0
  \end{array} \right] } &
  144 &
  119
  &
    16 &
  {\tiny \left[ \begin{array}{ccc|ccc}
  0 &\!\!\! 0 &\!\!\! 0 &  0 &\!\!\! 2 &\!\!\! 2 \\
  1 &\!\!\! 1 &\!\!\! 1 &  1 &\!\!\! 0 &\!\!\! 0 \\
  1 &\!\!\! 1 &\!\!\! 1 &  1 &\!\!\! 0 &\!\!\! 0
  \end{array} \right] } &
   36 &
  120
  \end{array}
  \]
  \smallskip
  \caption{The $3 \times 3 \times 2$ hyperdeterminant, orbits 81 to 120}
  \label{longtable3}
  \end{table}


  \begin{table}
  \[
  \begin{array}{rrrr|rrrr}
  \text{coef} & \text{representative}  & \text{size} & \#
  &
  \text{coef} & \text{representative}  & \text{size} & \#
  \\
   -20 &
  {\tiny \left[ \begin{array}{ccc|ccc}
  0 &\!\!\! 0 &\!\!\! 0 &  0 &\!\!\! 2 &\!\!\! 2 \\
  1 &\!\!\! 1 &\!\!\! 1 &  1 &\!\!\! 0 &\!\!\! 0 \\
  2 &\!\!\! 0 &\!\!\! 1 &  0 &\!\!\! 1 &\!\!\! 0
  \end{array} \right] } &
  144 &
  121
  &
     1 &
  {\tiny \left[ \begin{array}{ccc|ccc}
  0 &\!\!\! 0 &\!\!\! 0 &  0 &\!\!\! 2 &\!\!\! 2 \\
  2 &\!\!\! 0 &\!\!\! 0 &  0 &\!\!\! 0 &\!\!\! 2 \\
  2 &\!\!\! 2 &\!\!\! 0 &  0 &\!\!\! 0 &\!\!\! 0
  \end{array} \right] } &
   36 &
  122
  \\[8pt]
     4 &
  {\tiny \left[ \begin{array}{ccc|ccc}
  0 &\!\!\! 0 &\!\!\! 0 &  0 &\!\!\! 2 &\!\!\! 2 \\
  2 &\!\!\! 0 &\!\!\! 1 &  0 &\!\!\! 0 &\!\!\! 1 \\
  2 &\!\!\! 1 &\!\!\! 0 &  0 &\!\!\! 1 &\!\!\! 0
  \end{array} \right] } &
   36 &
  123
  &
     4 &
  {\tiny \left[ \begin{array}{ccc|ccc}
  0 &\!\!\! 0 &\!\!\! 0 &  0 &\!\!\! 2 &\!\!\! 2 \\
  2 &\!\!\! 0 &\!\!\! 0 &  0 &\!\!\! 1 &\!\!\! 1 \\
  2 &\!\!\! 1 &\!\!\! 1 &  0 &\!\!\! 0 &\!\!\! 0
  \end{array} \right] } &
   72 &
  124
  \\[8pt]
     4 &
  {\tiny \left[ \begin{array}{ccc|ccc}
  0 &\!\!\! 0 &\!\!\! 0 &  0 &\!\!\! 2 &\!\!\! 2 \\
  2 &\!\!\! 0 &\!\!\! 1 &  0 &\!\!\! 1 &\!\!\! 0 \\
  2 &\!\!\! 1 &\!\!\! 0 &  0 &\!\!\! 0 &\!\!\! 1
  \end{array} \right] } &
   36 &
  125
  &
     4 &
  {\tiny \left[ \begin{array}{ccc|ccc}
  0 &\!\!\! 0 &\!\!\! 1 &  0 &\!\!\! 2 &\!\!\! 1 \\
  0 &\!\!\! 0 &\!\!\! 2 &  2 &\!\!\! 0 &\!\!\! 0 \\
  1 &\!\!\! 2 &\!\!\! 0 &  1 &\!\!\! 0 &\!\!\! 0
  \end{array} \right] } &
   36 &
  126
  \\[8pt]
   -10 &
  {\tiny \left[ \begin{array}{ccc|ccc}
  0 &\!\!\! 0 &\!\!\! 1 &  0 &\!\!\! 2 &\!\!\! 1 \\
  0 &\!\!\! 0 &\!\!\! 2 &  2 &\!\!\! 0 &\!\!\! 0 \\
  2 &\!\!\! 1 &\!\!\! 0 &  0 &\!\!\! 1 &\!\!\! 0
  \end{array} \right] } &
   72 &
  127
  &
     8 &
  {\tiny \left[ \begin{array}{ccc|ccc}
  0 &\!\!\! 0 &\!\!\! 1 &  0 &\!\!\! 2 &\!\!\! 1 \\
  0 &\!\!\! 0 &\!\!\! 1 &  2 &\!\!\! 1 &\!\!\! 0 \\
  2 &\!\!\! 1 &\!\!\! 1 &  0 &\!\!\! 0 &\!\!\! 0
  \end{array} \right] } &
  144 &
  128
  \\[8pt]
   -20 &
  {\tiny \left[ \begin{array}{ccc|ccc}
  0 &\!\!\! 0 &\!\!\! 1 &  0 &\!\!\! 2 &\!\!\! 1 \\
  0 &\!\!\! 1 &\!\!\! 0 &  2 &\!\!\! 0 &\!\!\! 1 \\
  2 &\!\!\! 1 &\!\!\! 1 &  0 &\!\!\! 0 &\!\!\! 0
  \end{array} \right] } &
  144 &
  129
  &
   -12 &
  {\tiny \left[ \begin{array}{ccc|ccc}
  0 &\!\!\! 0 &\!\!\! 1 &  0 &\!\!\! 2 &\!\!\! 1 \\
  0 &\!\!\! 1 &\!\!\! 1 &  2 &\!\!\! 0 &\!\!\! 0 \\
  1 &\!\!\! 1 &\!\!\! 1 &  1 &\!\!\! 0 &\!\!\! 0
  \end{array} \right] } &
   72 &
  130
  \\[8pt]
    22 &
  {\tiny \left[ \begin{array}{ccc|ccc}
  0 &\!\!\! 0 &\!\!\! 1 &  0 &\!\!\! 2 &\!\!\! 1 \\
  0 &\!\!\! 1 &\!\!\! 1 &  2 &\!\!\! 0 &\!\!\! 0 \\
  2 &\!\!\! 0 &\!\!\! 1 &  0 &\!\!\! 1 &\!\!\! 0
  \end{array} \right] } &
  144 &
  131
  &
    -6 &
  {\tiny \left[ \begin{array}{ccc|ccc}
  0 &\!\!\! 0 &\!\!\! 1 &  0 &\!\!\! 2 &\!\!\! 1 \\
  0 &\!\!\! 1 &\!\!\! 1 &  2 &\!\!\! 0 &\!\!\! 0 \\
  2 &\!\!\! 1 &\!\!\! 0 &  0 &\!\!\! 0 &\!\!\! 1
  \end{array} \right] } &
  144 &
  132
  \\[8pt]
    18 &
  {\tiny \left[ \begin{array}{ccc|ccc}
  0 &\!\!\! 0 &\!\!\! 1 &  0 &\!\!\! 2 &\!\!\! 1 \\
  0 &\!\!\! 2 &\!\!\! 0 &  2 &\!\!\! 0 &\!\!\! 0 \\
  2 &\!\!\! 0 &\!\!\! 1 &  0 &\!\!\! 0 &\!\!\! 1
  \end{array} \right] } &
   72 &
  133
  &
   -26 &
  {\tiny \left[ \begin{array}{ccc|ccc}
  0 &\!\!\! 0 &\!\!\! 1 &  0 &\!\!\! 2 &\!\!\! 1 \\
  1 &\!\!\! 0 &\!\!\! 1 &  1 &\!\!\! 0 &\!\!\! 1 \\
  1 &\!\!\! 2 &\!\!\! 0 &  1 &\!\!\! 0 &\!\!\! 0
  \end{array} \right] } &
   72 &
  134
  \\[8pt]
    30 &
  {\tiny \left[ \begin{array}{ccc|ccc}
  0 &\!\!\! 0 &\!\!\! 1 &  0 &\!\!\! 2 &\!\!\! 1 \\
  1 &\!\!\! 0 &\!\!\! 1 &  1 &\!\!\! 0 &\!\!\! 1 \\
  2 &\!\!\! 1 &\!\!\! 0 &  0 &\!\!\! 1 &\!\!\! 0
  \end{array} \right] } &
   72 &
  135
  &
   -12 &
  {\tiny \left[ \begin{array}{ccc|ccc}
  0 &\!\!\! 0 &\!\!\! 1 &  0 &\!\!\! 2 &\!\!\! 1 \\
  1 &\!\!\! 0 &\!\!\! 0 &  1 &\!\!\! 1 &\!\!\! 1 \\
  2 &\!\!\! 1 &\!\!\! 1 &  0 &\!\!\! 0 &\!\!\! 0
  \end{array} \right] } &
  144 &
  136
  \\[8pt]
   -10 &
  {\tiny \left[ \begin{array}{ccc|ccc}
  0 &\!\!\! 0 &\!\!\! 1 &  0 &\!\!\! 2 &\!\!\! 1 \\
  1 &\!\!\! 0 &\!\!\! 1 &  1 &\!\!\! 1 &\!\!\! 0 \\
  1 &\!\!\! 1 &\!\!\! 1 &  1 &\!\!\! 0 &\!\!\! 0
  \end{array} \right] } &
  144 &
  137
  &
     2 &
  {\tiny \left[ \begin{array}{ccc|ccc}
  0 &\!\!\! 0 &\!\!\! 1 &  0 &\!\!\! 2 &\!\!\! 1 \\
  1 &\!\!\! 0 &\!\!\! 1 &  1 &\!\!\! 1 &\!\!\! 0 \\
  2 &\!\!\! 1 &\!\!\! 0 &  0 &\!\!\! 0 &\!\!\! 1
  \end{array} \right] } &
  144 &
  138
  \\[8pt]
    18 &
  {\tiny \left[ \begin{array}{ccc|ccc}
  0 &\!\!\! 0 &\!\!\! 1 &  0 &\!\!\! 2 &\!\!\! 1 \\
  1 &\!\!\! 1 &\!\!\! 0 &  1 &\!\!\! 0 &\!\!\! 1 \\
  1 &\!\!\! 1 &\!\!\! 1 &  1 &\!\!\! 0 &\!\!\! 0
  \end{array} \right] } &
  144 &
  139
  &
    16 &
  {\tiny \left[ \begin{array}{ccc|ccc}
  0 &\!\!\! 0 &\!\!\! 1 &  0 &\!\!\! 2 &\!\!\! 1 \\
  1 &\!\!\! 1 &\!\!\! 0 &  1 &\!\!\! 0 &\!\!\! 1 \\
  2 &\!\!\! 1 &\!\!\! 0 &  0 &\!\!\! 0 &\!\!\! 1
  \end{array} \right] } &
  144 &
  140
  \\[8pt]
   -26 &
  {\tiny \left[ \begin{array}{ccc|ccc}
  0 &\!\!\! 0 &\!\!\! 1 &  0 &\!\!\! 2 &\!\!\! 1 \\
  1 &\!\!\! 1 &\!\!\! 0 &  1 &\!\!\! 1 &\!\!\! 0 \\
  2 &\!\!\! 0 &\!\!\! 1 &  0 &\!\!\! 0 &\!\!\! 1
  \end{array} \right] } &
   36 &
  141
  &
    18 &
  {\tiny \left[ \begin{array}{ccc|ccc}
  0 &\!\!\! 0 &\!\!\! 1 &  0 &\!\!\! 2 &\!\!\! 1 \\
  1 &\!\!\! 2 &\!\!\! 0 &  1 &\!\!\! 0 &\!\!\! 0 \\
  2 &\!\!\! 0 &\!\!\! 0 &  0 &\!\!\! 0 &\!\!\! 2
  \end{array} \right] } &
   36 &
  142
  \\[8pt]
     4 &
  {\tiny \left[ \begin{array}{ccc|ccc}
  0 &\!\!\! 0 &\!\!\! 2 &  0 &\!\!\! 2 &\!\!\! 0 \\
  0 &\!\!\! 0 &\!\!\! 0 &  2 &\!\!\! 1 &\!\!\! 1 \\
  2 &\!\!\! 1 &\!\!\! 1 &  0 &\!\!\! 0 &\!\!\! 0
  \end{array} \right] } &
   72 &
  143
  &
   -24 &
  {\tiny \left[ \begin{array}{ccc|ccc}
  0 &\!\!\! 0 &\!\!\! 2 &  0 &\!\!\! 2 &\!\!\! 0 \\
  0 &\!\!\! 0 &\!\!\! 1 &  2 &\!\!\! 1 &\!\!\! 0 \\
  2 &\!\!\! 1 &\!\!\! 0 &  0 &\!\!\! 0 &\!\!\! 1
  \end{array} \right] } &
   72 &
  144
  \\[8pt]
    18 &
  {\tiny \left[ \begin{array}{ccc|ccc}
  0 &\!\!\! 0 &\!\!\! 2 &  0 &\!\!\! 2 &\!\!\! 0 \\
  0 &\!\!\! 1 &\!\!\! 0 &  2 &\!\!\! 0 &\!\!\! 1 \\
  2 &\!\!\! 1 &\!\!\! 0 &  0 &\!\!\! 0 &\!\!\! 1
  \end{array} \right] } &
   72 &
  145
  &
   -27 &
  {\tiny \left[ \begin{array}{ccc|ccc}
  0 &\!\!\! 0 &\!\!\! 2 &  0 &\!\!\! 2 &\!\!\! 0 \\
  0 &\!\!\! 2 &\!\!\! 0 &  2 &\!\!\! 0 &\!\!\! 0 \\
  2 &\!\!\! 0 &\!\!\! 0 &  0 &\!\!\! 0 &\!\!\! 2
  \end{array} \right] } &
   12 &
  146
  \\[8pt]
   -26 &
  {\tiny \left[ \begin{array}{ccc|ccc}
  0 &\!\!\! 0 &\!\!\! 2 &  0 &\!\!\! 2 &\!\!\! 0 \\
  1 &\!\!\! 0 &\!\!\! 0 &  1 &\!\!\! 1 &\!\!\! 1 \\
  1 &\!\!\! 1 &\!\!\! 1 &  1 &\!\!\! 0 &\!\!\! 0
  \end{array} \right] } &
   72 &
  147
  &
    16 &
  {\tiny \left[ \begin{array}{ccc|ccc}
  0 &\!\!\! 0 &\!\!\! 2 &  0 &\!\!\! 2 &\!\!\! 0 \\
  1 &\!\!\! 0 &\!\!\! 1 &  1 &\!\!\! 1 &\!\!\! 0 \\
  1 &\!\!\! 0 &\!\!\! 1 &  1 &\!\!\! 1 &\!\!\! 0
  \end{array} \right] } &
   36 &
  148
  \\[8pt]
    44 &
  {\tiny \left[ \begin{array}{ccc|ccc}
  0 &\!\!\! 0 &\!\!\! 2 &  0 &\!\!\! 2 &\!\!\! 0 \\
  1 &\!\!\! 0 &\!\!\! 1 &  1 &\!\!\! 1 &\!\!\! 0 \\
  1 &\!\!\! 1 &\!\!\! 0 &  1 &\!\!\! 0 &\!\!\! 1
  \end{array} \right] } &
   72 &
  149
  &
   -68 &
  {\tiny \left[ \begin{array}{ccc|ccc}
  0 &\!\!\! 0 &\!\!\! 2 &  0 &\!\!\! 2 &\!\!\! 0 \\
  1 &\!\!\! 1 &\!\!\! 0 &  1 &\!\!\! 0 &\!\!\! 1 \\
  1 &\!\!\! 1 &\!\!\! 0 &  1 &\!\!\! 0 &\!\!\! 1
  \end{array} \right] } &
   36 &
  150
  \\[8pt]
    36 &
  {\tiny \left[ \begin{array}{ccc|ccc}
  0 &\!\!\! 1 &\!\!\! 1 &  0 &\!\!\! 1 &\!\!\! 1 \\
  1 &\!\!\! 0 &\!\!\! 1 &  1 &\!\!\! 0 &\!\!\! 1 \\
  1 &\!\!\! 1 &\!\!\! 0 &  1 &\!\!\! 1 &\!\!\! 0
  \end{array} \right] } &
    6 &
  151
  &
    36 &
  {\tiny \left[ \begin{array}{ccc|ccc}
  0 &\!\!\! 1 &\!\!\! 1 &  0 &\!\!\! 1 &\!\!\! 1 \\
  1 &\!\!\! 0 &\!\!\! 0 &  1 &\!\!\! 1 &\!\!\! 1 \\
  1 &\!\!\! 1 &\!\!\! 1 &  1 &\!\!\! 0 &\!\!\! 0
  \end{array} \right] } &
   36 &
  152
  \\[8pt]
  -104 &
  {\tiny \left[ \begin{array}{ccc|ccc}
  0 &\!\!\! 1 &\!\!\! 1 &  0 &\!\!\! 1 &\!\!\! 1 \\
  1 &\!\!\! 0 &\!\!\! 1 &  1 &\!\!\! 1 &\!\!\! 0 \\
  1 &\!\!\! 1 &\!\!\! 0 &  1 &\!\!\! 0 &\!\!\! 1
  \end{array} \right] } &
   18 &
  153
  &
    -4 &
  {\tiny \left[ \begin{array}{ccc|ccc}
  0 &\!\!\! 0 &\!\!\! 0 &  1 &\!\!\! 1 &\!\!\! 2 \\
  0 &\!\!\! 1 &\!\!\! 1 &  1 &\!\!\! 1 &\!\!\! 0 \\
  2 &\!\!\! 1 &\!\!\! 1 &  0 &\!\!\! 0 &\!\!\! 0
  \end{array} \right] } &
   72 &
  154
  \\[8pt]
    -2 &
  {\tiny \left[ \begin{array}{ccc|ccc}
  0 &\!\!\! 0 &\!\!\! 0 &  1 &\!\!\! 1 &\!\!\! 2 \\
  0 &\!\!\! 2 &\!\!\! 0 &  1 &\!\!\! 0 &\!\!\! 1 \\
  2 &\!\!\! 1 &\!\!\! 1 &  0 &\!\!\! 0 &\!\!\! 0
  \end{array} \right] } &
  144 &
  155
  &
    -4 &
  {\tiny \left[ \begin{array}{ccc|ccc}
  0 &\!\!\! 0 &\!\!\! 0 &  1 &\!\!\! 1 &\!\!\! 2 \\
  0 &\!\!\! 2 &\!\!\! 1 &  1 &\!\!\! 0 &\!\!\! 0 \\
  1 &\!\!\! 1 &\!\!\! 1 &  1 &\!\!\! 0 &\!\!\! 0
  \end{array} \right] } &
  144 &
  156
  \\[8pt]
    26 &
  {\tiny \left[ \begin{array}{ccc|ccc}
  0 &\!\!\! 0 &\!\!\! 0 &  1 &\!\!\! 1 &\!\!\! 2 \\
  0 &\!\!\! 2 &\!\!\! 1 &  1 &\!\!\! 0 &\!\!\! 0 \\
  2 &\!\!\! 0 &\!\!\! 1 &  0 &\!\!\! 1 &\!\!\! 0
  \end{array} \right] } &
   72 &
  157
  &
   -16 &
  {\tiny \left[ \begin{array}{ccc|ccc}
  0 &\!\!\! 0 &\!\!\! 0 &  1 &\!\!\! 1 &\!\!\! 2 \\
  0 &\!\!\! 2 &\!\!\! 1 &  1 &\!\!\! 0 &\!\!\! 0 \\
  2 &\!\!\! 1 &\!\!\! 0 &  0 &\!\!\! 0 &\!\!\! 1
  \end{array} \right] } &
  144 &
  158
  \\[8pt]
    -4 &
  {\tiny \left[ \begin{array}{ccc|ccc}
  0 &\!\!\! 0 &\!\!\! 0 &  1 &\!\!\! 1 &\!\!\! 2 \\
  1 &\!\!\! 1 &\!\!\! 0 &  0 &\!\!\! 1 &\!\!\! 1 \\
  2 &\!\!\! 1 &\!\!\! 1 &  0 &\!\!\! 0 &\!\!\! 0
  \end{array} \right] } &
   72 &
  159
  &
    -8 &
  {\tiny \left[ \begin{array}{ccc|ccc}
  0 &\!\!\! 0 &\!\!\! 0 &  1 &\!\!\! 1 &\!\!\! 2 \\
  1 &\!\!\! 1 &\!\!\! 1 &  0 &\!\!\! 1 &\!\!\! 0 \\
  1 &\!\!\! 1 &\!\!\! 1 &  1 &\!\!\! 0 &\!\!\! 0
  \end{array} \right] } &
   72 &
  160
  \end{array}
  \]
  \smallskip
  \caption{The $3 \times 3 \times 2$ hyperdeterminant, orbits 121 to 160}
  \label{longtable4}
  \end{table}


  \begin{table}
  \[
  \begin{array}{rrrr|rrrr}
  \text{coef} & \text{representative}  & \text{size} & \#
  &
  \text{coef} & \text{representative}  & \text{size} & \#
  \\
    24 &
  {\tiny \left[ \begin{array}{ccc|ccc}
  0 &\!\!\! 0 &\!\!\! 0 &  1 &\!\!\! 1 &\!\!\! 2 \\
  1 &\!\!\! 1 &\!\!\! 1 &  0 &\!\!\! 1 &\!\!\! 0 \\
  2 &\!\!\! 1 &\!\!\! 0 &  0 &\!\!\! 0 &\!\!\! 1
  \end{array} \right] } &
  144 &
  161
  &
    -2 &
  {\tiny \left[ \begin{array}{ccc|ccc}
  0 &\!\!\! 0 &\!\!\! 0 &  1 &\!\!\! 1 &\!\!\! 2 \\
  1 &\!\!\! 2 &\!\!\! 0 &  0 &\!\!\! 0 &\!\!\! 1 \\
  2 &\!\!\! 1 &\!\!\! 0 &  0 &\!\!\! 0 &\!\!\! 1
  \end{array} \right] } &
   36 &
  162
  \\[8pt]
    54 &
  {\tiny \left[ \begin{array}{ccc|ccc}
  0 &\!\!\! 0 &\!\!\! 1 &  1 &\!\!\! 1 &\!\!\! 1 \\
  0 &\!\!\! 1 &\!\!\! 1 &  1 &\!\!\! 1 &\!\!\! 0 \\
  1 &\!\!\! 1 &\!\!\! 1 &  1 &\!\!\! 0 &\!\!\! 0
  \end{array} \right] } &
   36 &
  163
  &
   -22 &
  {\tiny \left[ \begin{array}{ccc|ccc}
  0 &\!\!\! 0 &\!\!\! 1 &  1 &\!\!\! 1 &\!\!\! 1 \\
  0 &\!\!\! 1 &\!\!\! 1 &  1 &\!\!\! 1 &\!\!\! 0 \\
  2 &\!\!\! 0 &\!\!\! 1 &  0 &\!\!\! 1 &\!\!\! 0
  \end{array} \right] } &
  144 &
  164
  \\[8pt]
    -8 &
  {\tiny \left[ \begin{array}{ccc|ccc}
  0 &\!\!\! 0 &\!\!\! 1 &  1 &\!\!\! 1 &\!\!\! 1 \\
  0 &\!\!\! 1 &\!\!\! 1 &  1 &\!\!\! 1 &\!\!\! 0 \\
  2 &\!\!\! 1 &\!\!\! 0 &  0 &\!\!\! 0 &\!\!\! 1
  \end{array} \right] } &
  144 &
  165
  &
    -4 &
  {\tiny \left[ \begin{array}{ccc|ccc}
  0 &\!\!\! 0 &\!\!\! 1 &  1 &\!\!\! 1 &\!\!\! 1 \\
  0 &\!\!\! 2 &\!\!\! 0 &  1 &\!\!\! 0 &\!\!\! 1 \\
  2 &\!\!\! 0 &\!\!\! 1 &  0 &\!\!\! 1 &\!\!\! 0
  \end{array} \right] } &
  144 &
  166
  \\[8pt]
    38 &
  {\tiny \left[ \begin{array}{ccc|ccc}
  0 &\!\!\! 0 &\!\!\! 1 &  1 &\!\!\! 1 &\!\!\! 1 \\
  0 &\!\!\! 2 &\!\!\! 0 &  1 &\!\!\! 0 &\!\!\! 1 \\
  2 &\!\!\! 1 &\!\!\! 0 &  0 &\!\!\! 0 &\!\!\! 1
  \end{array} \right] } &
   72 &
  167
  &
     6 &
  {\tiny \left[ \begin{array}{ccc|ccc}
  0 &\!\!\! 0 &\!\!\! 1 &  1 &\!\!\! 1 &\!\!\! 1 \\
  0 &\!\!\! 2 &\!\!\! 1 &  1 &\!\!\! 0 &\!\!\! 0 \\
  1 &\!\!\! 1 &\!\!\! 0 &  1 &\!\!\! 0 &\!\!\! 1
  \end{array} \right] } &
   72 &
  168
  \\[8pt]
    10 &
  {\tiny \left[ \begin{array}{ccc|ccc}
  0 &\!\!\! 0 &\!\!\! 1 &  1 &\!\!\! 1 &\!\!\! 1 \\
  0 &\!\!\! 1 &\!\!\! 1 &  2 &\!\!\! 0 &\!\!\! 0 \\
  1 &\!\!\! 2 &\!\!\! 0 &  0 &\!\!\! 0 &\!\!\! 1
  \end{array} \right] } &
  144 &
  169
  &
    -2 &
  {\tiny \left[ \begin{array}{ccc|ccc}
  0 &\!\!\! 0 &\!\!\! 1 &  1 &\!\!\! 1 &\!\!\! 1 \\
  1 &\!\!\! 1 &\!\!\! 0 &  0 &\!\!\! 1 &\!\!\! 1 \\
  1 &\!\!\! 1 &\!\!\! 1 &  1 &\!\!\! 0 &\!\!\! 0
  \end{array} \right] } &
   72 &
  170
  \\[8pt]
   -36 &
  {\tiny \left[ \begin{array}{ccc|ccc}
  0 &\!\!\! 0 &\!\!\! 1 &  1 &\!\!\! 1 &\!\!\! 1 \\
  1 &\!\!\! 1 &\!\!\! 0 &  0 &\!\!\! 1 &\!\!\! 1 \\
  2 &\!\!\! 1 &\!\!\! 0 &  0 &\!\!\! 0 &\!\!\! 1
  \end{array} \right] } &
   72 &
  171
  &
    12 &
  {\tiny \left[ \begin{array}{ccc|ccc}
  0 &\!\!\! 0 &\!\!\! 2 &  1 &\!\!\! 1 &\!\!\! 0 \\
  0 &\!\!\! 0 &\!\!\! 1 &  1 &\!\!\! 2 &\!\!\! 0 \\
  2 &\!\!\! 1 &\!\!\! 0 &  0 &\!\!\! 0 &\!\!\! 1
  \end{array} \right] } &
   72 &
  172
  \\[8pt]
   -30 &
  {\tiny \left[ \begin{array}{ccc|ccc}
  0 &\!\!\! 0 &\!\!\! 2 &  1 &\!\!\! 1 &\!\!\! 0 \\
  0 &\!\!\! 2 &\!\!\! 0 &  1 &\!\!\! 0 &\!\!\! 1 \\
  2 &\!\!\! 0 &\!\!\! 0 &  0 &\!\!\! 1 &\!\!\! 1
  \end{array} \right] } &
   12 &
  173
  &
   -16 &
  {\tiny \left[ \begin{array}{ccc|ccc}
  0 &\!\!\! 0 &\!\!\! 2 &  1 &\!\!\! 1 &\!\!\! 0 \\
  0 &\!\!\! 1 &\!\!\! 0 &  2 &\!\!\! 0 &\!\!\! 1 \\
  1 &\!\!\! 2 &\!\!\! 0 &  0 &\!\!\! 0 &\!\!\! 1
  \end{array} \right] } &
   72 &
  174
  \\[8pt]
   -32 &
  {\tiny \left[ \begin{array}{ccc|ccc}
  0 &\!\!\! 0 &\!\!\! 2 &  1 &\!\!\! 1 &\!\!\! 0 \\
  0 &\!\!\! 1 &\!\!\! 1 &  2 &\!\!\! 0 &\!\!\! 0 \\
  1 &\!\!\! 1 &\!\!\! 0 &  0 &\!\!\! 1 &\!\!\! 1
  \end{array} \right] } &
   72 &
  175
  &
    76 &
  {\tiny \left[ \begin{array}{ccc|ccc}
  0 &\!\!\! 0 &\!\!\! 2 &  1 &\!\!\! 1 &\!\!\! 0 \\
  1 &\!\!\! 1 &\!\!\! 0 &  0 &\!\!\! 1 &\!\!\! 1 \\
  1 &\!\!\! 1 &\!\!\! 0 &  1 &\!\!\! 0 &\!\!\! 1
  \end{array} \right] } &
   36 &
  176
  \\[8pt]
    10 &
  {\tiny \left[ \begin{array}{ccc|ccc}
  0 &\!\!\! 0 &\!\!\! 1 &  1 &\!\!\! 2 &\!\!\! 0 \\
  1 &\!\!\! 0 &\!\!\! 1 &  0 &\!\!\! 1 &\!\!\! 1 \\
  2 &\!\!\! 1 &\!\!\! 0 &  0 &\!\!\! 0 &\!\!\! 1
  \end{array} \right] } &
   72 &
  177
  &
   -30 &
  {\tiny \left[ \begin{array}{ccc|ccc}
  0 &\!\!\! 1 &\!\!\! 1 &  1 &\!\!\! 0 &\!\!\! 1 \\
  1 &\!\!\! 1 &\!\!\! 0 &  0 &\!\!\! 1 &\!\!\! 1 \\
  1 &\!\!\! 0 &\!\!\! 1 &  1 &\!\!\! 1 &\!\!\! 0
  \end{array} \right] } &
   12 &
  178
  \end{array}
  \]
  \smallskip
  \caption{The $3 \times 3 \times 2$ hyperdeterminant, orbits 161 to 178}
  \label{longtable5}
  \end{table}


\begin{table}
\begin{verbatim}
result := []:
for o to nops(invariant) do
  c := invariant[o][1]:
  for i in orbitlist[o] do
    x := unflatten( weightzero[i] ):
    for j to 3 do for k to 2 do
      if x[2,j,k] >= 1 then
        xx := copy(x):
        xx[2,j,k] := xx[2,j,k] - 1:
        xx[1,j,k] := xx[1,j,k] + 1:
        result := [ op(result), [ c*x[2,j,k], flatten(xx) ] ]
      fi
    od od
  od:
  result := compress( result )
od:
\end{verbatim}
\caption{Maple code to verify results with integer arithmetic}
\label{mapleinteger}
\end{table}


  \begin{table}
  \[
  \begin{array}{rrrr|rrrr}
  & \text{representative}  & \text{size} & \#
  &
  & \text{representative}  & \text{size} & \#
  \\
     1 &
  {\tiny \left[ \begin{array}{cccc|cccc}
  0 &\!\!\! 0 &\!\!\! 0 &\!\!\! 0 & 0 &\!\!\! 0 &\!\!\! 0 &\!\!\! 2 \\
  0 &\!\!\! 0 &\!\!\! 0 &\!\!\! 0 & 0 &\!\!\! 0 &\!\!\! 2 &\!\!\! 0 \\
  0 &\!\!\! 2 &\!\!\! 0 &\!\!\! 0 & 0 &\!\!\! 0 &\!\!\! 0 &\!\!\! 0 \\
  2 &\!\!\! 0 &\!\!\! 0 &\!\!\! 0 & 0 &\!\!\! 0 &\!\!\! 0 &\!\!\! 0
  \end{array} \right] } &
   144 &
     1
  &
    -2 &
  {\tiny \left[ \begin{array}{cccc|cccc}
  0 &\!\!\! 0 &\!\!\! 0 &\!\!\! 0 & 0 &\!\!\! 0 &\!\!\! 0 &\!\!\! 2 \\
  0 &\!\!\! 0 &\!\!\! 0 &\!\!\! 0 & 0 &\!\!\! 0 &\!\!\! 2 &\!\!\! 0 \\
  1 &\!\!\! 1 &\!\!\! 0 &\!\!\! 0 & 0 &\!\!\! 0 &\!\!\! 0 &\!\!\! 0 \\
  1 &\!\!\! 1 &\!\!\! 0 &\!\!\! 0 & 0 &\!\!\! 0 &\!\!\! 0 &\!\!\! 0
  \end{array} \right] } &
   144 &
     2
  \\[12pt]
    -1 &
  {\tiny \left[ \begin{array}{cccc|cccc}
  0 &\!\!\! 0 &\!\!\! 0 &\!\!\! 0 & 0 &\!\!\! 0 &\!\!\! 0 &\!\!\! 2 \\
  0 &\!\!\! 0 &\!\!\! 1 &\!\!\! 0 & 0 &\!\!\! 0 &\!\!\! 1 &\!\!\! 0 \\
  0 &\!\!\! 1 &\!\!\! 0 &\!\!\! 0 & 0 &\!\!\! 1 &\!\!\! 0 &\!\!\! 0 \\
  2 &\!\!\! 0 &\!\!\! 0 &\!\!\! 0 & 0 &\!\!\! 0 &\!\!\! 0 &\!\!\! 0
  \end{array} \right] } &
   288 &
     3
  &
     1 &
  {\tiny \left[ \begin{array}{cccc|cccc}
  0 &\!\!\! 0 &\!\!\! 0 &\!\!\! 0 & 0 &\!\!\! 0 &\!\!\! 0 &\!\!\! 2 \\
  0 &\!\!\! 0 &\!\!\! 1 &\!\!\! 0 & 0 &\!\!\! 0 &\!\!\! 1 &\!\!\! 0 \\
  0 &\!\!\! 1 &\!\!\! 0 &\!\!\! 0 & 1 &\!\!\! 0 &\!\!\! 0 &\!\!\! 0 \\
  1 &\!\!\! 1 &\!\!\! 0 &\!\!\! 0 & 0 &\!\!\! 0 &\!\!\! 0 &\!\!\! 0
  \end{array} \right] } &
  1152 &
     4
  \\[12pt]
    -2 &
  {\tiny \left[ \begin{array}{cccc|cccc}
  0 &\!\!\! 0 &\!\!\! 0 &\!\!\! 0 & 0 &\!\!\! 0 &\!\!\! 0 &\!\!\! 2 \\
  0 &\!\!\! 0 &\!\!\! 2 &\!\!\! 0 & 0 &\!\!\! 0 &\!\!\! 0 &\!\!\! 0 \\
  0 &\!\!\! 0 &\!\!\! 0 &\!\!\! 0 & 1 &\!\!\! 1 &\!\!\! 0 &\!\!\! 0 \\
  1 &\!\!\! 1 &\!\!\! 0 &\!\!\! 0 & 0 &\!\!\! 0 &\!\!\! 0 &\!\!\! 0
  \end{array} \right] } &
   576 &
     5
  &
     3 &
  {\tiny \left[ \begin{array}{cccc|cccc}
  0 &\!\!\! 0 &\!\!\! 0 &\!\!\! 0 & 0 &\!\!\! 0 &\!\!\! 0 &\!\!\! 2 \\
  0 &\!\!\! 0 &\!\!\! 2 &\!\!\! 0 & 0 &\!\!\! 0 &\!\!\! 0 &\!\!\! 0 \\
  0 &\!\!\! 1 &\!\!\! 0 &\!\!\! 0 & 1 &\!\!\! 0 &\!\!\! 0 &\!\!\! 0 \\
  1 &\!\!\! 0 &\!\!\! 0 &\!\!\! 0 & 0 &\!\!\! 1 &\!\!\! 0 &\!\!\! 0
  \end{array} \right] } &
   288 &
     6
  \\[12pt]
     2 &
  {\tiny \left[ \begin{array}{cccc|cccc}
  0 &\!\!\! 0 &\!\!\! 0 &\!\!\! 0 & 0 &\!\!\! 0 &\!\!\! 0 &\!\!\! 2 \\
  0 &\!\!\! 0 &\!\!\! 0 &\!\!\! 0 & 0 &\!\!\! 1 &\!\!\! 1 &\!\!\! 0 \\
  1 &\!\!\! 0 &\!\!\! 1 &\!\!\! 0 & 0 &\!\!\! 0 &\!\!\! 0 &\!\!\! 0 \\
  1 &\!\!\! 1 &\!\!\! 0 &\!\!\! 0 & 0 &\!\!\! 0 &\!\!\! 0 &\!\!\! 0
  \end{array} \right] } &
  1152 &
     7
  &
     2 &
  {\tiny \left[ \begin{array}{cccc|cccc}
  0 &\!\!\! 0 &\!\!\! 0 &\!\!\! 0 & 0 &\!\!\! 0 &\!\!\! 0 &\!\!\! 2 \\
  0 &\!\!\! 0 &\!\!\! 1 &\!\!\! 0 & 0 &\!\!\! 1 &\!\!\! 0 &\!\!\! 0 \\
  0 &\!\!\! 0 &\!\!\! 1 &\!\!\! 0 & 1 &\!\!\! 0 &\!\!\! 0 &\!\!\! 0 \\
  1 &\!\!\! 1 &\!\!\! 0 &\!\!\! 0 & 0 &\!\!\! 0 &\!\!\! 0 &\!\!\! 0
  \end{array} \right] } &
   576 &
     8
  \\[12pt]
    -3 &
  {\tiny \left[ \begin{array}{cccc|cccc}
  0 &\!\!\! 0 &\!\!\! 0 &\!\!\! 0 & 0 &\!\!\! 0 &\!\!\! 0 &\!\!\! 2 \\
  0 &\!\!\! 0 &\!\!\! 1 &\!\!\! 0 & 0 &\!\!\! 1 &\!\!\! 0 &\!\!\! 0 \\
  0 &\!\!\! 1 &\!\!\! 0 &\!\!\! 0 & 1 &\!\!\! 0 &\!\!\! 0 &\!\!\! 0 \\
  1 &\!\!\! 0 &\!\!\! 1 &\!\!\! 0 & 0 &\!\!\! 0 &\!\!\! 0 &\!\!\! 0
  \end{array} \right] } &
  1152 &
     9
  &
     6 &
  {\tiny \left[ \begin{array}{cccc|cccc}
  0 &\!\!\! 0 &\!\!\! 0 &\!\!\! 1 & 0 &\!\!\! 0 &\!\!\! 0 &\!\!\! 1 \\
  0 &\!\!\! 0 &\!\!\! 1 &\!\!\! 0 & 0 &\!\!\! 0 &\!\!\! 1 &\!\!\! 0 \\
  0 &\!\!\! 1 &\!\!\! 0 &\!\!\! 0 & 0 &\!\!\! 1 &\!\!\! 0 &\!\!\! 0 \\
  1 &\!\!\! 0 &\!\!\! 0 &\!\!\! 0 & 1 &\!\!\! 0 &\!\!\! 0 &\!\!\! 0
  \end{array} \right] } &
    24 &
    10
  \\[12pt]
     2 &
  {\tiny \left[ \begin{array}{cccc|cccc}
  0 &\!\!\! 0 &\!\!\! 0 &\!\!\! 1 & 0 &\!\!\! 0 &\!\!\! 0 &\!\!\! 1 \\
  0 &\!\!\! 0 &\!\!\! 1 &\!\!\! 0 & 0 &\!\!\! 0 &\!\!\! 1 &\!\!\! 0 \\
  0 &\!\!\! 0 &\!\!\! 0 &\!\!\! 0 & 1 &\!\!\! 1 &\!\!\! 0 &\!\!\! 0 \\
  1 &\!\!\! 1 &\!\!\! 0 &\!\!\! 0 & 0 &\!\!\! 0 &\!\!\! 0 &\!\!\! 0
  \end{array} \right] } &
   288 &
    11
  &
    -8 &
  {\tiny \left[ \begin{array}{cccc|cccc}
  0 &\!\!\! 0 &\!\!\! 0 &\!\!\! 1 & 0 &\!\!\! 0 &\!\!\! 0 &\!\!\! 1 \\
  0 &\!\!\! 0 &\!\!\! 1 &\!\!\! 0 & 0 &\!\!\! 0 &\!\!\! 1 &\!\!\! 0 \\
  0 &\!\!\! 1 &\!\!\! 0 &\!\!\! 0 & 1 &\!\!\! 0 &\!\!\! 0 &\!\!\! 0 \\
  1 &\!\!\! 0 &\!\!\! 0 &\!\!\! 0 & 0 &\!\!\! 1 &\!\!\! 0 &\!\!\! 0
  \end{array} \right] } &
   144 &
    12
  \\[12pt]
    -1 &
  {\tiny \left[ \begin{array}{cccc|cccc}
  0 &\!\!\! 0 &\!\!\! 0 &\!\!\! 1 & 0 &\!\!\! 0 &\!\!\! 0 &\!\!\! 1 \\
  0 &\!\!\! 0 &\!\!\! 0 &\!\!\! 0 & 0 &\!\!\! 1 &\!\!\! 1 &\!\!\! 0 \\
  0 &\!\!\! 0 &\!\!\! 1 &\!\!\! 0 & 1 &\!\!\! 0 &\!\!\! 0 &\!\!\! 0 \\
  1 &\!\!\! 1 &\!\!\! 0 &\!\!\! 0 & 0 &\!\!\! 0 &\!\!\! 0 &\!\!\! 0
  \end{array} \right] } &
  1152 &
    13
  &
    -1 &
  {\tiny \left[ \begin{array}{cccc|cccc}
  0 &\!\!\! 0 &\!\!\! 0 &\!\!\! 1 & 0 &\!\!\! 0 &\!\!\! 0 &\!\!\! 1 \\
  0 &\!\!\! 0 &\!\!\! 0 &\!\!\! 0 & 0 &\!\!\! 1 &\!\!\! 1 &\!\!\! 0 \\
  1 &\!\!\! 0 &\!\!\! 0 &\!\!\! 0 & 0 &\!\!\! 0 &\!\!\! 1 &\!\!\! 0 \\
  1 &\!\!\! 1 &\!\!\! 0 &\!\!\! 0 & 0 &\!\!\! 0 &\!\!\! 0 &\!\!\! 0
  \end{array} \right] } &
   576 &
    14
  \\[12pt]
     9 &
  {\tiny \left[ \begin{array}{cccc|cccc}
  0 &\!\!\! 0 &\!\!\! 0 &\!\!\! 1 & 0 &\!\!\! 0 &\!\!\! 0 &\!\!\! 1 \\
  0 &\!\!\! 0 &\!\!\! 1 &\!\!\! 0 & 0 &\!\!\! 1 &\!\!\! 0 &\!\!\! 0 \\
  0 &\!\!\! 1 &\!\!\! 0 &\!\!\! 0 & 1 &\!\!\! 0 &\!\!\! 0 &\!\!\! 0 \\
  1 &\!\!\! 0 &\!\!\! 0 &\!\!\! 0 & 0 &\!\!\! 0 &\!\!\! 1 &\!\!\! 0
  \end{array} \right] } &
   192 &
    15
  &
     4 &
  {\tiny \left[ \begin{array}{cccc|cccc}
  0 &\!\!\! 0 &\!\!\! 0 &\!\!\! 0 & 0 &\!\!\! 0 &\!\!\! 1 &\!\!\! 1 \\
  0 &\!\!\! 0 &\!\!\! 0 &\!\!\! 0 & 0 &\!\!\! 0 &\!\!\! 1 &\!\!\! 1 \\
  1 &\!\!\! 1 &\!\!\! 0 &\!\!\! 0 & 0 &\!\!\! 0 &\!\!\! 0 &\!\!\! 0 \\
  1 &\!\!\! 1 &\!\!\! 0 &\!\!\! 0 & 0 &\!\!\! 0 &\!\!\! 0 &\!\!\! 0
  \end{array} \right] } &
    36 &
    16
  \\[12pt]
    -1 &
  {\tiny \left[ \begin{array}{cccc|cccc}
  0 &\!\!\! 0 &\!\!\! 0 &\!\!\! 0 & 0 &\!\!\! 0 &\!\!\! 1 &\!\!\! 1 \\
  0 &\!\!\! 0 &\!\!\! 0 &\!\!\! 1 & 0 &\!\!\! 0 &\!\!\! 1 &\!\!\! 0 \\
  0 &\!\!\! 1 &\!\!\! 0 &\!\!\! 0 & 1 &\!\!\! 0 &\!\!\! 0 &\!\!\! 0 \\
  1 &\!\!\! 1 &\!\!\! 0 &\!\!\! 0 & 0 &\!\!\! 0 &\!\!\! 0 &\!\!\! 0
  \end{array} \right] } &
   576 &
    17
  &
     4 &
  {\tiny \left[ \begin{array}{cccc|cccc}
  0 &\!\!\! 0 &\!\!\! 0 &\!\!\! 0 & 0 &\!\!\! 0 &\!\!\! 1 &\!\!\! 1 \\
  0 &\!\!\! 0 &\!\!\! 1 &\!\!\! 1 & 0 &\!\!\! 0 &\!\!\! 0 &\!\!\! 0 \\
  0 &\!\!\! 0 &\!\!\! 0 &\!\!\! 0 & 1 &\!\!\! 1 &\!\!\! 0 &\!\!\! 0 \\
  1 &\!\!\! 1 &\!\!\! 0 &\!\!\! 0 & 0 &\!\!\! 0 &\!\!\! 0 &\!\!\! 0
  \end{array} \right] } &
   144 &
    18
  \\[12pt]
     4 &
  {\tiny \left[ \begin{array}{cccc|cccc}
  0 &\!\!\! 0 &\!\!\! 0 &\!\!\! 0 & 0 &\!\!\! 0 &\!\!\! 1 &\!\!\! 1 \\
  0 &\!\!\! 0 &\!\!\! 1 &\!\!\! 1 & 0 &\!\!\! 0 &\!\!\! 0 &\!\!\! 0 \\
  0 &\!\!\! 1 &\!\!\! 0 &\!\!\! 0 & 1 &\!\!\! 0 &\!\!\! 0 &\!\!\! 0 \\
  0 &\!\!\! 1 &\!\!\! 0 &\!\!\! 0 & 1 &\!\!\! 0 &\!\!\! 0 &\!\!\! 0
  \end{array} \right] } &
   144 &
    19
  &
    -6 &
  {\tiny \left[ \begin{array}{cccc|cccc}
  0 &\!\!\! 0 &\!\!\! 0 &\!\!\! 0 & 0 &\!\!\! 0 &\!\!\! 1 &\!\!\! 1 \\
  0 &\!\!\! 0 &\!\!\! 1 &\!\!\! 1 & 0 &\!\!\! 0 &\!\!\! 0 &\!\!\! 0 \\
  0 &\!\!\! 1 &\!\!\! 0 &\!\!\! 0 & 1 &\!\!\! 0 &\!\!\! 0 &\!\!\! 0 \\
  1 &\!\!\! 0 &\!\!\! 0 &\!\!\! 0 & 0 &\!\!\! 1 &\!\!\! 0 &\!\!\! 0
  \end{array} \right] } &
   288 &
    20
  \\[12pt]
    -2 &
  {\tiny \left[ \begin{array}{cccc|cccc}
  0 &\!\!\! 0 &\!\!\! 0 &\!\!\! 0 & 0 &\!\!\! 0 &\!\!\! 1 &\!\!\! 1 \\
  0 &\!\!\! 0 &\!\!\! 0 &\!\!\! 0 & 0 &\!\!\! 1 &\!\!\! 0 &\!\!\! 1 \\
  1 &\!\!\! 0 &\!\!\! 1 &\!\!\! 0 & 0 &\!\!\! 0 &\!\!\! 0 &\!\!\! 0 \\
  1 &\!\!\! 1 &\!\!\! 0 &\!\!\! 0 & 0 &\!\!\! 0 &\!\!\! 0 &\!\!\! 0
  \end{array} \right] } &
   576 &
    21
  &
    -2 &
  {\tiny \left[ \begin{array}{cccc|cccc}
  0 &\!\!\! 0 &\!\!\! 0 &\!\!\! 0 & 0 &\!\!\! 0 &\!\!\! 1 &\!\!\! 1 \\
  0 &\!\!\! 0 &\!\!\! 0 &\!\!\! 1 & 0 &\!\!\! 1 &\!\!\! 0 &\!\!\! 0 \\
  0 &\!\!\! 0 &\!\!\! 1 &\!\!\! 0 & 1 &\!\!\! 0 &\!\!\! 0 &\!\!\! 0 \\
  1 &\!\!\! 1 &\!\!\! 0 &\!\!\! 0 & 0 &\!\!\! 0 &\!\!\! 0 &\!\!\! 0
  \end{array} \right] } &
   576 &
    22
  \\[12pt]
     3 &
  {\tiny \left[ \begin{array}{cccc|cccc}
  0 &\!\!\! 0 &\!\!\! 0 &\!\!\! 0 & 0 &\!\!\! 0 &\!\!\! 1 &\!\!\! 1 \\
  0 &\!\!\! 0 &\!\!\! 0 &\!\!\! 1 & 0 &\!\!\! 1 &\!\!\! 0 &\!\!\! 0 \\
  0 &\!\!\! 1 &\!\!\! 0 &\!\!\! 0 & 1 &\!\!\! 0 &\!\!\! 0 &\!\!\! 0 \\
  1 &\!\!\! 0 &\!\!\! 1 &\!\!\! 0 & 0 &\!\!\! 0 &\!\!\! 0 &\!\!\! 0
  \end{array} \right] } &
  1152 &
    23
  &
     3 &
  {\tiny \left[ \begin{array}{cccc|cccc}
  0 &\!\!\! 0 &\!\!\! 0 &\!\!\! 0 & 0 &\!\!\! 0 &\!\!\! 1 &\!\!\! 1 \\
  0 &\!\!\! 0 &\!\!\! 0 &\!\!\! 1 & 0 &\!\!\! 1 &\!\!\! 0 &\!\!\! 0 \\
  1 &\!\!\! 0 &\!\!\! 0 &\!\!\! 0 & 0 &\!\!\! 0 &\!\!\! 1 &\!\!\! 0 \\
  1 &\!\!\! 1 &\!\!\! 0 &\!\!\! 0 & 0 &\!\!\! 0 &\!\!\! 0 &\!\!\! 0
  \end{array} \right] } &
  1152 &
    24
  \\[12pt]
    -2 &
  {\tiny \left[ \begin{array}{cccc|cccc}
  0 &\!\!\! 0 &\!\!\! 0 &\!\!\! 0 & 0 &\!\!\! 0 &\!\!\! 1 &\!\!\! 1 \\
  0 &\!\!\! 0 &\!\!\! 0 &\!\!\! 1 & 0 &\!\!\! 1 &\!\!\! 0 &\!\!\! 0 \\
  1 &\!\!\! 0 &\!\!\! 1 &\!\!\! 0 & 0 &\!\!\! 0 &\!\!\! 0 &\!\!\! 0 \\
  1 &\!\!\! 0 &\!\!\! 0 &\!\!\! 0 & 0 &\!\!\! 1 &\!\!\! 0 &\!\!\! 0
  \end{array} \right] } &
  1152 &
    25
  &
    -2 &
  {\tiny \left[ \begin{array}{cccc|cccc}
  0 &\!\!\! 0 &\!\!\! 0 &\!\!\! 0 & 0 &\!\!\! 0 &\!\!\! 1 &\!\!\! 1 \\
  0 &\!\!\! 1 &\!\!\! 0 &\!\!\! 1 & 0 &\!\!\! 0 &\!\!\! 0 &\!\!\! 0 \\
  0 &\!\!\! 0 &\!\!\! 0 &\!\!\! 0 & 1 &\!\!\! 1 &\!\!\! 0 &\!\!\! 0 \\
  1 &\!\!\! 0 &\!\!\! 1 &\!\!\! 0 & 0 &\!\!\! 0 &\!\!\! 0 &\!\!\! 0
  \end{array} \right] } &
   288 &
    26
  \\[12pt]
    14 &
  {\tiny \left[ \begin{array}{cccc|cccc}
  0 &\!\!\! 0 &\!\!\! 0 &\!\!\! 1 & 0 &\!\!\! 0 &\!\!\! 1 &\!\!\! 0 \\
  0 &\!\!\! 0 &\!\!\! 1 &\!\!\! 0 & 0 &\!\!\! 0 &\!\!\! 0 &\!\!\! 1 \\
  0 &\!\!\! 1 &\!\!\! 0 &\!\!\! 0 & 1 &\!\!\! 0 &\!\!\! 0 &\!\!\! 0 \\
  1 &\!\!\! 0 &\!\!\! 0 &\!\!\! 0 & 0 &\!\!\! 1 &\!\!\! 0 &\!\!\! 0
  \end{array} \right] } &
    72 &
    27
  &
   -12 &
  {\tiny \left[ \begin{array}{cccc|cccc}
  0 &\!\!\! 0 &\!\!\! 0 &\!\!\! 1 & 0 &\!\!\! 0 &\!\!\! 1 &\!\!\! 0 \\
  0 &\!\!\! 0 &\!\!\! 1 &\!\!\! 0 & 0 &\!\!\! 1 &\!\!\! 0 &\!\!\! 0 \\
  0 &\!\!\! 1 &\!\!\! 0 &\!\!\! 0 & 1 &\!\!\! 0 &\!\!\! 0 &\!\!\! 0 \\
  1 &\!\!\! 0 &\!\!\! 0 &\!\!\! 0 & 0 &\!\!\! 0 &\!\!\! 0 &\!\!\! 1
  \end{array} \right] } &
   144 &
    28
  \end{array}
  \]
  \smallskip
  \caption{The simplest invariant of a $4 \times 4 \times 2$ array}
  \label{table442}
  \end{table}


\section*{Acknowledgement}

The author was supported by a Discovery Grant from NSERC, the Natural Sciences and Engineering Research Council of Canada.


\end{document}